\def\Im{{\rm Im}\,}
\renewcommand{\span}{span}
\newtheorem{thm}{Theorem}[section]
\newtheorem{prop}[thm]{Proposition}
\newtheorem{lem}[thm]{Lemma}
 \newtheorem{rem}[thm]{Remark}
 \newtheorem{cor}[thm]{Corollary}
\newtheorem{defn}[thm]{Definition}
\DeclareMathOperator{\supp}{supp}
\DeclareMathOperator{\ext}{ext}
\newcommand{\C}{\mathbb{C}}
\newcommand{\R}{\mathbb{R}}
\newcommand{\D}{\mathbb{D}}
\newcommand{\la}{\lambda}
\long\def\comment#1{{}}
\def\ph{\phantom{-}}
\def\p{\ph}
 \title{Indeterminate Jacobi operators}
 \author{Christian Berg and Ryszard Szwarc}
\begin{document}

 \maketitle

\begin{abstract} We consider the Jacobi operator $(T,D(T))$ associated with an indeterminate Hamburger moment problem, i.e., the operator in $\ell^2$ defined as the closure of the Jacobi matrix acting on the subspace of complex sequences with only finitely many non-zero terms. It is well-known that it is symmmetric with deficiency indices $(1,1)$. For a complex number $z$ let $\mathfrak{p}_z, \mathfrak{q}_z$ denote the square summable sequences $(p_n(z))$ and $(q_n(z))$    
 corresponding to the orthonormal polynomials $p_n$ and  polynomials $q_n$ of the second kind. We determine whether linear combinations of $\mathfrak{p}_u, \mathfrak{p}_v,\mathfrak{q}_u,\mathfrak{q}_v$ for $u,v\in\C$ belong to $D(T)$ or to the domain of the self-adjoint extensions of $T$ in $\ell^2$.
The results depend on the four Nevanlinna functions of two variables associated with the moment problem. We also show that $D(T)$ is the common range of an explicitly constructed family of bounded operators on $\ell^2$.  
\end{abstract}

{\bf Mathematics Subject Classification}: Primary 47B25, 47B36, 44A60

{\bf Keywords}. Jacobi matrices and operators, indeterminate moment problems.

\section{Introduction} We shall consider the Jacobi matrix $J$ associated with a moment sequence $s=(s_n)_{n\ge 0}$ of the form
\begin{equation}\label{eq:mom}
s_n=\int x^n\,d\mu(x),\quad n=0,1,\ldots,
\end{equation}
where $\mu$ is a positive measure on $\R$  with infinite support and moments of every order. It is a tridiagonal matrix of the  form
\begin{equation}\label{eq:Jac}
J=\begin{pmatrix}
b_0 & a_0 & 0 & \hdots\\
a_0 & b_1 & a_1 & \hdots\\
0 & a_1 & b_2 & \hdots\\
\vdots &\vdots & \vdots & \ddots
\end{pmatrix},
\end{equation}
where $a_n>0, b_n\in\R, n\ge 0$  are given by the three term recurrence relation
\begin{equation*}\label{eq:3term}
xp_n(x)=a_np_{n+1}(x)+b_np_n(x)+a_{n-1}p_{n-1}(x), n\ge 0, \quad a_{-1}:=0.
\end{equation*}
Here $(p_n)_{n\ge 0}$ is the sequence of orthonormal polynomials associated with $\mu$, hence satisfying
\begin{equation*}\label{eq:op}
\int p_n(x)p_m(x)\,d\mu(x)=\delta_{n,m},
\end{equation*}
and $p_n$ is a real polynomial of degree $n$ with positive leading coefficient. In this paper   we follow the terminology of \cite{Sch}. Basic results about the classical moment problem can also be found in \cite{Ak} and \cite{S}. Recent  results about indeterminate moment problems can be found in \cite{B:S1}, \cite{B:S2} \cite{B:S3}, \cite{B:S4}.

 It is  clear that the proportional measures $\la\mu, \la>0$ lead to the same Jacobi matrix $J$, and the well-known Theorem of Favard (see \cite[Theorem 5.14]{Sch}) states that any matrix of the form \eqref{eq:Jac} with $a_n>0, b_n\in\R$ comes from a unique  moment sequence  $(s_n)$ as above, normalized such that $s_0=1$. In the following we shall always assume that this normalization holds, and consequently the solutions $\mu$ of \eqref{eq:mom} are probability measures  and $p_0=1$.

The Jacobi matrix acts as a symmetric operator  in the Hilbert space $\ell^2$ of square summable complex sequences. Its domain $\mathcal F$ consists of the complex sequences $(c_n)_{n\ge 0}$ with only finitely many non-zero terms, and the action is multiplication of the matrix $J$ by $c\in\mathcal F$ considered as a column, i.e.,
\begin{equation}\label{eq:J} 
(Jc)_n:=a_{n-1}c_{n-1}+b_n c_n+a_n c_{n+1},\quad n\ge 0.
\end{equation}
Denoting $(e_n)_{n\ge 0}$ the standard orthonormal basis of $\ell^2$, we have
\begin{equation*}\label{eq:F}
\mathcal F=\span\{e_n|n\ge 0\}.
\end{equation*}

\begin{defn} The Jacobi operator associated with $J$ is by definition the closure $(T,D(T))$ of the symmetric operator $(J,\mathcal F)$.
\end{defn}

It is a classical fact that $(T,D(T))$ is a closed symmetric operator, and its deficiency indices are either $(0,0)$  or $(1,1)$. These cases occur precisely if the  moment sequence \eqref{eq:mom} is {\it determinate} or {\it indeterminate}, i.e., there is exactly one or several solutions $\mu$ satisfying \eqref{eq:mom}.   

 By definition $D(T)$ consists of those $c\in\ell^2$ for which there exists a sequence
 $(c^{(k)})\in\mathcal F$ such that $\lim_{k\to\infty}c^{(k)}=c$ and $(Jc^{(k)})$ is a convergent sequence in $\ell^2$. For  such $c$ we have $Tc=\lim_{k\to\infty} Jc^{(k)}$, and this limit is independent of the choice of approximating sequence $(c^{(k)})$. 
 
 Clearly, $D(T)$ is closed under complex conjugation and 
 $$
 T\overline{c}=\overline{Tc},\quad c\in D(T).
 $$
 
 The purpose of the present paper is to study the Jacobi operator $(T,D(T))$ as well as its self-adjoint extensions $(T_t, D(T_t)), t\in\R^*:=\R\cup\{\infty\}$ in the indeterminate case. We shall in particular give some families of sequences $c\in\ell^2$ which belong to $D(T)$, see Theorem~\ref{thm:pqmain}--Theorem~\ref{thm:pos}. 
 
 Section 2 is devoted to the proof of Theorem~\ref{thm:pqmain} after  a presentation of the deficiency spaces of $(T, D(T))$. The self-adjoint extensions of $(T, D(T))$ as well as their corresponding N-extremal solutions to \eqref{eq:mom}, cf. \eqref{eq:Next}, are introduced in Section 3. 
 
 In Theorem~\ref{thm:pro1}, Theorem~\ref{thm:D(T_t)} and Theorem~\ref{thm:pro2} we describe vectors belonging to $D(T_t)\setminus D(T)$. Like the results in Theorem~\ref{thm:pqmain}--Theorem~\ref{thm:pos}, they depend on the Nevanlinna functions of two variables defined in \eqref{eq:A},  \eqref{eq:B}, \eqref{eq:C}, \eqref{eq:D}.
 
 In Section 4 we construct for each $z_0\in\C$ a bounded operator $\Xi_{z_0}$ in $\ell^2$ with range $D(T)$. The restriction of $\Xi_{z_0}$ to $(T-z_0I)(D(T))$ is a bijection onto $D(T)$ equal to $(T-z_0I)^{-1}$, see Theorem~\ref{thm:para}. It is based
  on a study of the function space $\mathcal E$ defined in \eqref{eq:E}, and known to be a de Branges space of entire functions by \cite[Theorem 23]{Br}. We prove in particular Theorem~\ref{thm:deB}, showing that $\mathcal E$ is stable under the formation of difference quotients.
 
 Various technical  results about the Nevanlinna functions are given in Section 5.
 
After this summary of the content of the present paper, we recall that the adjoint operator $(T^*,D(T^*))$ is the maximal operator associated with $J$, cf. \cite[Proposition 6.5]{Sch}. In fact, the matrix product of $J$  and any column vector $c$ makes sense, cf. \eqref{eq:J}, and $D(T^*)$ consists of those $c\in\ell^2$ for which the product $Jc$ belongs to $\ell^2$. For $c\in D(T^*)$ we have $T^*c=Jc$.
 
In the determinate case with a unique solution $\mu$ of \eqref{eq:mom}, the Jacobi operator is self-adjoint and $(p_n)$ is an orthonormal basis of $L^2(\mu)$. The self-adjoint operator of multiplication $M_\mu$ in $L^2(\mu)$ given by 
$$
D(M_\mu)=\{f\in L^2(\mu)\mid xf(x)\in L^2(\mu)\},\quad M_\mu f(x)=xf(x)
$$
is unitarily equivalent with $(T, D(T))$ via the unitary operator $U:\ell^2\to L^2(\mu)$ given by $U(e_n)=p_n, n\ge 0$. We shall not study the determinate case in this paper, but concentrate on the  indeterminate case, where it is known that  the set of solutions $\mu$ to \eqref{eq:mom} is an infinite convex set $V$. The polynomials of the second kind $(q_n)$ are given as
\begin{equation*}
q_n(z)=\int \frac{p_n(z)-p_n(x)}{z-x}\,d\mu(x), \quad z\in \C,
\end{equation*}
where $\mu\in V$ is arbitrary.

We define  and recall
\begin{equation}\label{eq:frak}
 \mathfrak{p}_z:=(p_n(z)), \mathfrak{q}_z:=(q_n(z))\in\ell^2,\quad z\in\C,
 \end{equation}
 where we have followed the terminology of \cite{Sch}. It is known that
   $||\mathfrak{p}_z||$ and $||\mathfrak{q}_z||$ are positive continuous functions on $\C$.  It is therefore possible 
for $c\in\ell^2$ to define entire functions $F_c, G_c$ as
\begin{equation}\label{eq:FGc}
F_c(z)=\sum_{n=0}^\infty c_np_n(z),\quad G_c(z)=\sum_{n=0}^\infty c_nq_n(z),\quad z\in\C.
\end{equation}

We also have the following four entire functions of two complex variables, called the {\it Nevanlinna functions} of the indeterminate moment problem:
\begin{eqnarray}
A(u,v)&=&(u-v)\sum_{k=0}^\infty q_k(u)q_k(v)\label{eq:A}\\
B(u,v)&=&-1+(u-v)\sum_{k=0}^\infty p_k(u)q_k(v) \label{eq:B}\\
C(u,v)&=&1+(u-v)\sum_{k=0}^\infty q_k(u)p_k(v)\label{eq:C}\\
D(u,v)&=&(u-v)\sum_{k=0}^\infty p_k(u)p_k(v)\label{eq:D},
\end{eqnarray}
see Section 7.1 in \cite{Sch}. The two-variable  functions were introduced in \cite{Bu:Ca} in a slightly different form, which was subsequently  used in \cite{B},\cite{Pe}. An approximation to the two-variable functions was already considered in \cite[p. 123]{Ak}.
If the functions of \cite{Bu:Ca} are marked with a $*$, we have
\begin{eqnarray*}\label{eq:A*-D*}
A^*(u,v)&=&-A(u,v),\; B^*(u,v)=-C(u,v),\\
C^*(u,v)&=&-B(u,v),\; D^*(u,v)=-D(u,v).
\end{eqnarray*} 
 In the following we need several formulas about these functions, see Theorem~\ref{thm:3points} and Corollary~\ref{thm:2to1} in the Appendix, but at this point we just recall that 
\begin{equation}\label{eq:det=1}
A(u,v)D(u,v)-B(u,v)C(u,v)=1,\quad u,v\in\C.
\end{equation}
We define entire functions of one variable by setting the second variable to 0, i.e., 
\begin{equation}\label{eq:A-D}
A(u)=A(u,0),\;B(u)=B(u,0),\;C(u)=C(u,0),\;D(u)=D(u,0),
\end{equation}
and by specialization of \eqref{eq:det=1} we get
\begin{equation}\label{eq:det=1*}
A(u)D(u)-B(u)C(u)=1,\quad u\in\C.
\end{equation}
By Section 6.5 in \cite{Sch} we have
\begin{equation}\label{eq:p,q}
\mathfrak{p}_z, \mathfrak{q}_z \in D(T^*),\quad T^*\mathfrak{p}_z=z\mathfrak{p}_z, 
T^*\mathfrak{q}_z=e_0+z\mathfrak{q}_z,\quad z\in\C.
\end{equation}

Our first main result is the following:

\begin{thm}\label{thm:pqmain} For all $z\in\C$ we have $\mathfrak{p}_z, \mathfrak{q}_z\notin D(T)$.

Let $u,v\in\C$ be given.
\begin{enumerate}
\item[(i)] There exists $\alpha\in\C$ such that $\mathfrak{p}_u+\alpha \mathfrak{p}_v\in D(T)$ if and only if $D(u,v)=0$. In the affirmative case $\alpha$  is uniquely determined as
$\alpha=B(u,v)$. 

\item[(ii)] There exists $\beta\in\C$ such that $\mathfrak{q}_u+\beta \mathfrak{q}_v\in D(T)$ if and only if $A(u,v)=0$. In the affirmative case $\beta$  is uniquely determined as $\beta=-C(u,v)$.

\item[(iii)] There exists $\gamma\in\C$ such that $\mathfrak{p}_u+\gamma \mathfrak{q}_v\in D(T)$ if and only if $B(u,v)=0$. In the affirmative case $\gamma$  is uniquely determined as $\gamma=-D(u,v)$. In particular $\mathfrak{p}_u+\gamma\mathfrak{q}_u\notin D(T)$ for all $u, \gamma\in\C$.
\end{enumerate}
\end{thm}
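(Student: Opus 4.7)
My plan is to reduce membership in $D(T)$ to a pair of scalar conditions on a bilinear antisymmetric boundary form $[\,\cdot\,,\,\cdot\,]$ on $D(T^*)$ whose values on $\mathfrak{p}_u,\mathfrak{q}_u$ are exactly the four Nevanlinna functions, and then read off the theorem from \eqref{eq:det=1}.

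First I set up the form. Abel summation for the tridiagonal $J$ gives, for any two sequences $c,d$,
\[
\sum_{n=0}^{N}\bigl((Jc)_nd_n-c_n(Jd)_n\bigr)=a_N(c_{N+1}d_N-c_Nd_{N+1}),
\]
so for $c,d\in D(T^*)$ the limit $[c,d]:=\lim_N a_N(c_{N+1}d_N-c_Nd_{N+1})$ exists and defines an antisymmetric bilinear form on $D(T^*)$. Using the Christoffel--Darboux identities, which are just the finite-$N$ versions of \eqref{eq:A}--\eqref{eq:D}, I verify
\[
[\mathfrak{p}_u,\mathfrak{p}_v]=D(u,v),\quad [\mathfrak{q}_u,\mathfrak{q}_v]=A(u,v),\quad [\mathfrak{p}_u,\mathfrak{q}_v]=B(u,v),\quad [\mathfrak{q}_u,\mathfrak{p}_v]=C(u,v);
\]
in particular $[\mathfrak{p}_u,\mathfrak{q}_u]=B(u,u)=-1$.

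Second, the criterion: $[c,d]=0$ whenever $c\in D(T)$ and $d\in D(T^*)$ by the defining property of the adjoint, and conversely $(T^*)^*=T$ forces $c\in D(T)$ as soon as $c\in D(T^*)$ and $[c,\cdot]$ vanishes on all of $D(T^*)$. Since the deficiency indices are $(1,1)$, the quotient $D(T^*)/D(T)$ has complex dimension $2$, and the value $[\mathfrak{p}_u,\mathfrak{q}_u]=-1$ shows that $\{[\mathfrak{p}_u],[\mathfrak{q}_u]\}$ spans this quotient for every $u\in\C$. Consequently, for $c\in D(T^*)$,
\[
c\in D(T)\iff [c,\mathfrak{p}_u]=0=[c,\mathfrak{q}_u].
\]

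The theorem now falls out. The non-membership $\mathfrak{p}_z,\mathfrak{q}_z\notin D(T)$ and the final clause of (iii) are immediate from $[\mathfrak{p}_z,\mathfrak{q}_z]=-1\ne 0$. For (i), pairing $c=\mathfrak{p}_u+\alpha\mathfrak{p}_v$ with $\mathfrak{p}_u$ and $\mathfrak{q}_u$ yields $-\alpha D(u,v)$ and $-1-\alpha C(u,v)$; vanishing of both forces $D(u,v)=0$ and $\alpha=-1/C(u,v)$, which by \eqref{eq:det=1} (with $D(u,v)=0$, so $B(u,v)C(u,v)=-1$) equals $B(u,v)$. Parts (ii) and (iii) are entirely parallel: in (ii) the determinant identity converts $-1/B$ to $-C$ under $A(u,v)=0$, and in (iii) it converts $-1/A$ to $D$ under $B(u,v)=0$. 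The only delicate point in the whole argument is the sign bookkeeping in the first step, where the initial-data asymmetry $p_0=1$, $q_0=0$, $q_1=1/a_0$ is precisely what produces the $\mp 1$ constants in \eqref{eq:B}, \eqref{eq:C}; once those signs are pinned down, every case of the theorem is a single line of algebra from \eqref{eq:det=1}.
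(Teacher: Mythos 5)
Your proposal is correct, and it follows a genuinely different route from the paper's. You characterize $D(T)$ inside $D(T^*)$ as the radical of the Glazman--Krein--Naimark boundary form $[c,d]=\lim_N a_N(c_{N+1}d_N-c_Nd_{N+1})$, identify the values of that form on $\mathfrak{p}_u,\mathfrak{q}_v$ with the two-variable Nevanlinna functions via the limits of the Christoffel--Darboux/Liouville--Ostrogradski expressions (these are exactly Proposition~\ref{thm:An-Dn}), and then each case of the theorem reduces to two scalar equations closed off by \eqref{eq:det=1}. The paper instead fixes $z_0$ in the upper half-plane, computes the deficiency components of $\mathfrak{p}_\la,\mathfrak{q}_\la$ in the von Neumann decomposition \eqref{eq:direct} explicitly (Proposition~\ref{thm:pq}), and solves a $2\times 2$ linear system whose determinant is evaluated through Lemma~\ref{thm:doubledet}, Corollary~\ref{thm:2to1}, the three-point identities of Theorem~\ref{thm:3points}, and the Pick-function property of Proposition~\ref{thm:Pick}. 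Your route bypasses all of that auxiliary machinery, treats the three cases uniformly, and makes the first assertion and the last clause of (iii) one-line consequences of $B(u,u)=-1$; its cost is that the boundary-form criterion itself must be justified, and since your form is bilinear rather than sesquilinear this requires the (standard, and noted in the paper) facts that $D(T^*)$ is conjugation-invariant and $T^*\overline{d}=\overline{T^*d}$, so that $[c,d]=\langle T^*c,\overline{d}\rangle-\langle c,T^*\overline{d}\rangle$ and the equivalence with $T=(T^*)^*$ goes through. Two sign slips in your parenthetical bookkeeping for (ii) and (iii): with your normalization the intermediate values are $\beta=1/B(u,v)$ and $\gamma=-1/A(u,v)$, which \eqref{eq:det=1} converts to $-C(u,v)$ and $-D(u,v)$ respectively; the final constants you state agree with the theorem, so this does not affect the validity of the argument.
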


The proof will be given in Section 2.

We shall next give results about  the  zero-sets of the entire functions $A,\ldots,D$ of two variables.

\begin{thm}\label{thm:zeros} Let $F(u,v)$ denote any of the four functions $A,B,C,D$ on $\C^2$. For  $v\in\C$ define 
\begin{equation}\label{eq:zset}
Z(F)_v:=\{u\in\C \mid F(u,v)=0\}.
\end{equation}
Then $Z(F)_v$ is countably infinite. If $v\in\R$ then $Z(F)_v\subset \R$, and if $v$ is in either the upper or lower half-plane, then $Z(F)_v$ belongs to the same half-plane.
\end{thm}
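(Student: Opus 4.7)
The plan is to reduce Theorem~\ref{thm:zeros} to properties of the one-variable Nevanlinna functions $A,B,C,D$ in \eqref{eq:A-D}. The three-point formulas that I expect to appear in the Appendix (Theorem~\ref{thm:3points} and Corollary~\ref{thm:2to1}) should yield the bilinear expansions
$$
A(u,v)=B(u)A(v)-A(u)B(v),\quad D(u,v)=D(u)C(v)-C(u)D(v),
$$
$$
B(u,v)=B(u)C(v)-A(u)D(v),\quad C(u,v)=D(u)A(v)-C(u)B(v),
$$
which can be verified by specializing to $u=0$ and to $v=0$ and using the symmetries $A(v,u)=-A(u,v)$, $D(v,u)=-D(u,v)$, $C(v,u)=-B(u,v)$ that follow directly from \eqref{eq:A}--\eqref{eq:D}. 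For fixed $v$, the function $F(\cdot,v)$ is then a $\C$-linear combination of the pair $\{A,B\}$ (if $F\in\{A,B\}$) or $\{C,D\}$ (if $F\in\{C,D\}$), and the coefficient vector is nonzero by \eqref{eq:det=1*}; consequently $F(\cdot,v)$ is a nonzero entire function of order $\le 1$ and minimal exponential type.

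For the location of the zeros I would invoke the Nevanlinna parametrization of the solutions to \eqref{eq:mom}. The Cauchy transforms of the N-extremal measures $\mu_0$ and $\mu_\infty$ are, up to a common sign, $C/D$ and $A/B$, so both of these ratios together with their reciprocals are Pick functions modulo an overall sign. Rewriting each zero equation as a level-set equation then gives
$$
A(u,v)=0\Leftrightarrow (A/B)(u)=(A/B)(v),\quad D(u,v)=0\Leftrightarrow (D/C)(u)=(D/C)(v),
$$
with mixed versions for $B(u,v)$ and $C(u,v)$ coupling $(A/B)(u)$ to $(C/D)(v)$ and $(D/C)(u)$ to $(B/A)(v)$. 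A non-constant Pick or anti-Pick function sends $\R$ into $\R\cup\{\infty\}$ and sends each open half-plane strictly into one of the two open half-planes; hence the right-hand side of each level equation lies in $\R\cup\{\infty\}$, $\C^+$, or $\C^-$ precisely as $v$ does, and the same stratum is forced upon $u$. The bookkeeping at exceptional points where a denominator vanishes is handled by returning to the bilinear form $F_1(u)G_2(v)-F_2(u)G_1(v)$, since \eqref{eq:det=1*} excludes simultaneous vanishing of a coefficient pair.

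For cardinality, Hadamard factorization says that an entire function of order $\le 1$ and minimal exponential type with only finitely many zeros must be a polynomial, so it suffices to show $F(\cdot,v)$ is not a polynomial. Via the bilinear expansion this reduces to showing that no non-trivial $\C$-linear combination of $\{C,D\}$ or of $\{A,B\}$ is a polynomial; equivalently, that the ratios $C/D$ and $A/B$ are not rational. This follows from the Nevanlinna parametrization once more: the real poles of $C/D$ and $A/B$ are the supports of the N-extremal measures $\mu_0$ and $\mu_\infty$, which in the indeterminate case are known to be infinite discrete subsets of $\R$. The main obstacle I foresee is making this non-polynomial step clean uniformly in $v$, since the naive ratio reformulation breaks down at values of $v$ where one of $A(v),B(v),C(v),D(v)$ vanishes; however, keeping the bilinear form throughout and invoking \eqref{eq:det=1*} to rule out simultaneous vanishing of a coefficient pair should cover all cases.
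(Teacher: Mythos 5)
Your overall strategy is the same as the paper's (expand $F(\cdot,v)$ bilinearly in the one-variable Nevanlinna functions, use Pick-function properties to locate the zeros, and use minimal exponential type plus Hadamard to get countable infinitude), but there is a concrete error at the heart of the argument: all four of your bilinear expansions are wrong. The correct identities, which are \eqref{eq:A1}--\eqref{eq:D1} in the paper and follow from the transfer-matrix relation $h(u,0)h(0,v)=h(u,v)$, are
$$
A(u,v)=A(u)C(v)-C(u)A(v),\quad B(u,v)=B(u)C(v)-D(u)A(v),
$$
$$
C(u,v)=A(u)D(v)-C(u)B(v),\quad D(u,v)=B(u)D(v)-D(u)B(v).
$$
In each of your formulas the $u$-factors and $v$-factors are mispaired (e.g.\ you would need $(B+C)/D$ to be constant for your version of $D(u,v)$ to agree with the true one). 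Note that your proposed verification --- specializing to $u=0$ and $v=0$ and using the antisymmetries --- cannot detect this: your incorrect formulas also pass that test, since it only checks the identity on two lines in $\C^2$.

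The error propagates: for fixed $v$, $A(\cdot,v)$ and $C(\cdot,v)$ are combinations of the pair $\{A,C\}$ and $B(\cdot,v)$, $D(\cdot,v)$ of the pair $\{B,D\}$ (not $\{A,B\}$ and $\{C,D\}$ as you claim), so the correct level-set equations are $D(u,v)=0\iff (B/D)(u)=(B/D)(v)$, $A(u,v)=0\iff (A/C)(u)=(A/C)(v)$, and the mixed versions for $B$ and $C$ couple $B/D$ with $A/C$. The Pick functions you therefore need are $B/D$ and $A/C$ --- which is exactly the paper's Proposition~\ref{thm:Pick}, proved by citing \cite{B} and a truncation argument --- and \emph{not} $A/B$ and $C/D$; the latter are indeed (anti-)Pick as Stieltjes transforms of $\mu_0$ and $\mu_\infty$ via \eqref{eq:Npar}, but that observation does not yield the Pick property of $B/D$ or $A/C$, so this step of your argument is missing its justification even after the identities are repaired. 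Your treatment of countability (non-polynomiality plus Hadamard/Lindel\"of) is essentially the paper's Proposition~\ref{thm:minexp}, though after correcting the expansions the non-rationality you need is that of $B/D$ and $A/C$, and for non-real coefficient ratios you should argue via realness of $B,D$ (a polynomial identity $B+\tau D=P$ with $\tau\notin\R$ would force $B$ and $D$ themselves to be polynomials) rather than via supports of N-extremal measures alone.
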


As a follow up on the two previous theorems we have the following:
\begin{thm}\label{thm:pos} Let  $v\in\R$ be given and consider the set of real zeros $Z(F)_v$ from Theorem~\ref{thm:zeros}.
\begin{enumerate}
\item[(i)] Let $u\in Z(D)_v$ be such that $u<v$ and such that $]u,v[\cap Z(D)_v=\emptyset$. Then $B(u,v)>0$, where  $\mathfrak{p}_u+B(u,v)\mathfrak{p}_v\in D(T)$ according to Theorem~\ref{thm:pqmain}.
\item[(ii)] Let $u\in Z(A)_v$ be such that $u<v$ and such that $]u,v[\cap Z(A)_v=\emptyset$. Then $C(u,v)<0$, where  $\mathfrak{q}_u-C(u,v)\mathfrak{q}_v\in D(T)$ according to Theorem~\ref{thm:pqmain}.
\end{enumerate}
\end{thm}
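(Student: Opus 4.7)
The plan is to extract from each inclusion supplied by Theorem~\ref{thm:pqmain} a one-variable proportionality identity, and then read off the sign of $B(u,v)$ or $C(u,v)$ by evaluating that identity at any point $w$ in the open interval $(u,v)$.

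For (i), put $K(x,y):=\sum_{k\ge 0}p_k(x)p_k(y)$, so that $(x-y)K(x,y)=D(x,y)$, $K(x,x)=\|\mathfrak p_x\|^2>0$, and $\langle\mathfrak p_x,\mathfrak p_y\rangle=K(x,y)$ for real $x,y$. Theorem~\ref{thm:pqmain}(i) together with $D(u,v)=0$ gives $c:=\mathfrak p_u+B(u,v)\mathfrak p_v\in D(T)$, while \eqref{eq:p,q} yields $Tc=u\mathfrak p_u+vB(u,v)\mathfrak p_v$. For any real $w$, the adjoint identity $\langle Tc,\mathfrak p_w\rangle=\langle c,T^*\mathfrak p_w\rangle=w\langle c,\mathfrak p_w\rangle$ becomes
\[uK(u,w)+vB(u,v)K(v,w)=w\bigl(K(u,w)+B(u,v)K(v,w)\bigr),\]
which, after collecting $(x-w)K(x,w)$ factors, rearranges to
\[D(u,w)+B(u,v)D(v,w)=0,\]
first for real $w$, and then for all $w\in\C$ by analyticity. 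From $A(u,v)D(u,v)-B(u,v)C(u,v)=1$ at $D(u,v)=0$ one obtains $B(u,v)C(u,v)=-1$, so $B(u,v)\ne 0$ and $B(u,v)=-D(u,w)/D(v,w)$ whenever $D(v,w)\ne 0$.

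To read off the sign, pick any $w\in(u,v)$. By $K(x,y)=K(y,x)$, the hypothesis $(u,v)\cap Z(D)_v=\emptyset$ says $K(v,\cdot)$ has no zeros in $(u,v)$; with $K(v,v)>0$, continuity forces $K(v,w)>0$, and $v-w>0$ then gives $D(v,w)>0$. The proportionality $D(u,\cdot)=-B(u,v)D(v,\cdot)$ transfers this non-vanishing: $D(u,w)\ne 0$ on $(u,v)$, so $K(u,\cdot)\ne 0$ there, and $K(u,u)>0$ forces $K(u,w)>0$; combined with $u-w<0$ this gives $D(u,w)<0$. Hence $B(u,v)=-D(u,w)/D(v,w)>0$.

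Part (ii) is entirely parallel. Setting $L(x,y):=\sum_{k\ge 0}q_k(x)q_k(y)$ we have $(x-y)L(x,y)=A(x,y)$ and $L(x,x)=\|\mathfrak q_x\|^2>0$. Theorem~\ref{thm:pqmain}(ii) places $c':=\mathfrak q_u-C(u,v)\mathfrak q_v$ in $D(T)$, and \eqref{eq:p,q} gives $Tc'=(1-C(u,v))e_0+u\mathfrak q_u-vC(u,v)\mathfrak q_v$. Because $q_0\equiv 0$, both $\langle e_0,\mathfrak q_w\rangle$ and $\langle c',e_0\rangle$ vanish, and the identity $\langle Tc',\mathfrak q_w\rangle=\langle c',T^*\mathfrak q_w\rangle$ collapses to
\[A(u,w)=C(u,v)A(v,w)\qquad(w\in\C).\]
The same sign analysis with $L$ in place of $K$ produces $A(v,w)>0$ and $A(u,w)<0$ on $(u,v)$, whence $C(u,v)=A(u,w)/A(v,w)<0$. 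The one non-routine step is the derivation of the proportionality identities via the duality $\langle Tc,\cdot\rangle=\langle c,T^*\cdot\rangle$; once these are in hand, everything reduces to tracking signs of the factors $(x-w)$ and of the Hilbert-space kernels $K$ and $L$, with the hypothesis serving only to prevent sign changes in the open interval $(u,v)$.
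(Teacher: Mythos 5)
Your proof is correct. For part (i) you follow essentially the same route as the paper: the one-variable identity $D(u,w)=-B(u,v)D(v,w)$ is exactly what the paper extracts from its proof of Theorem~\ref{thm:pqmain}(i) (there obtained from the linear system \eqref{eq:Di} in the variable $z_0$ rather than from the duality $\langle Tc,\mathfrak{p}_w\rangle=\langle c,T^*\mathfrak{p}_w\rangle$, but the two derivations are interchangeable), and your sign analysis via $K(u,u)=\|\mathfrak{p}_u\|^2>0$, $K(v,v)=\|\mathfrak{p}_v\|^2>0$ together with non-vanishing on $]u,v[$ is the same as the paper's computation of the limits of $D(u,x)/(u-x)$ as $x\to u^+$ and of $(v-x)/D(v,x)$ as $x\to v^-$. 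Where you genuinely diverge is part (ii): the paper reduces it to part (i) applied to the truncated Jacobi matrix $J^{(1)}$ of Remark~\ref{thm:trunc}, invoking the formulas $\widetilde{B}(u,v)=(v-b_0)A(u,v)-C(u,v)$ and $\widetilde{D}(u,v)=a_0^2A(u,v)$ from \cite{Pe}, so that $A(u,v)=0$ yields $-C(u,v)=\widetilde{B}(u,v)>0$; you instead rerun the argument of (i) on the kernel $L(x,y)=\sum q_k(x)q_k(y)$, using $q_0\equiv 0$ to kill the $e_0$ terms and arriving at $A(u,w)=C(u,v)A(v,w)$, which is \eqref{eq:A3} specialized to $A(u,v)=0$. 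Your version is more self-contained --- it needs neither the truncated moment problem nor the external Nevanlinna-matrix relations --- at the price of repeating the sign-tracking; the paper's reduction buys brevity and makes the $p\leftrightarrow q$ symmetry structurally visible.
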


The proofs of Theorem~\ref{thm:zeros} and Theorem~\ref{thm:pos} will be given in Section 5.

\section{Preliminaires and proof of Theorem~\ref{thm:pqmain}} 

 Fix $z_0\in\C$ in the open upper half-plane and consider the deficiency spaces
\begin{eqnarray*}
\Delta^+(z_0) &=&\ker(T^*-z_0I)=\C \mathfrak{p}_{z_0}\\
\Delta^-(z_0) &=&\ker(T^*-\overline{z_0}I)=\C\mathfrak{p}_{\overline{z_0}},
\end{eqnarray*}
cf. \eqref{eq:p,q}.

We know from \cite[section 80]{A:G} that
\begin{equation}\label{eq:direct}
D(T^*)=D(T)\oplus \Delta^+(z_0) \oplus \Delta^-(z_0),
\end{equation} 
and the sum is direct as indicated by the $\oplus$ signs.

\begin{prop}\label{thm:pq} For any $\la\in \C$ we have the decomposition from \eqref{eq:direct} 
\begin{equation}\label{eq:p}
\mathfrak{p}_{\la}=s_\la +s_\la^+\mathfrak{p}_{z_0}+s_\la^-\mathfrak{p}_{\overline{z_0}},
\end{equation}
where $s_\la\in D(T)$ and
\begin{equation}\label{eq:s+-}
s_\la^+=\frac{D(\la,\overline{z_0})}{2i\Im(z_0)||\mathfrak{p}_{z_0}||^2},\quad 
s_\la^-=-\frac{D(\la, z_0)}{2i\Im(z_0)||\mathfrak{p}_{z_0}||^2}.
\end{equation}
Similarly, we have the decomposition
\begin{equation}\label{eq:q}
\mathfrak{q}_{\la}=r_\la +r_\la^+\mathfrak{p}_{z_0}+r_\la^-\mathfrak{p}_{\overline{z_0}},
\end{equation}
where $r_\la\in D(T)$ and
\begin{equation}\label{eq:r+-}
r_\la^+=\frac{C(\la,\overline{z_0})}{2i\Im(z_0)||\mathfrak{p}_{z_0}||^2},\quad 
r_\la^-=-\frac{C(\la, z_0)}{2i\Im(z_0)||\mathfrak{p}_{z_0}||^2}.
\end{equation}
\end{prop}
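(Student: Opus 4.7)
The plan is to exploit the sesquilinear boundary form
$$[c,d]:=\langle T^*c,d\rangle-\langle c,T^*d\rangle,\qquad c,d\in D(T^*),$$
which, by symmetry of $T$, vanishes whenever either argument lies in $D(T)$. Existence of the decompositions \eqref{eq:p} and \eqref{eq:q} is immediate from \eqref{eq:p,q} together with \eqref{eq:direct}, so the whole problem reduces to identifying the coefficients $s_\lambda^\pm, r_\lambda^\pm$ by pairing with $\mathfrak{p}_{z_0}$ and $\mathfrak{p}_{\overline{z_0}}$ through $[\cdot,\cdot]$.

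The key observation is that the boundary forms of the $\mathfrak{p}_\lambda,\mathfrak{q}_\lambda$ against $\mathfrak{p}_{z_0},\mathfrak{p}_{\overline{z_0}}$ are exactly the Nevanlinna functions of two variables. Using $T^*\mathfrak{p}_z=z\mathfrak{p}_z$ from \eqref{eq:p,q} and the reality of the polynomial coefficients (so $\overline{p_n(z)}=p_n(\overline z)$),
$$[\mathfrak{p}_\lambda,\mathfrak{p}_{z_0}]=(\lambda-\overline{z_0})\sum_{n}p_n(\lambda)p_n(\overline{z_0})=D(\lambda,\overline{z_0}),$$
and analogously $[\mathfrak{p}_\lambda,\mathfrak{p}_{\overline{z_0}}]=D(\lambda,z_0)$ by \eqref{eq:D}. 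For $\mathfrak{q}_\lambda$ the same computation, combined with $T^*\mathfrak{q}_\lambda=e_0+\lambda\mathfrak{q}_\lambda$ and $\langle e_0,\mathfrak{p}_z\rangle=p_0(\overline z)=1$, gives
$$[\mathfrak{q}_\lambda,\mathfrak{p}_{z_0}]=1+(\lambda-\overline{z_0})\sum_{n}q_n(\lambda)p_n(\overline{z_0})=C(\lambda,\overline{z_0}),$$
and likewise $[\mathfrak{q}_\lambda,\mathfrak{p}_{\overline{z_0}}]=C(\lambda,z_0)$, by \eqref{eq:C}.

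On the other hand, the basis of the deficiency part is almost diagonal for $[\cdot,\cdot]$: a direct computation using the same eigenvalue identities yields $[\mathfrak{p}_{z_0},\mathfrak{p}_{z_0}]=2i\Im(z_0)\|\mathfrak{p}_{z_0}\|^2$, $[\mathfrak{p}_{\overline{z_0}},\mathfrak{p}_{\overline{z_0}}]=-2i\Im(z_0)\|\mathfrak{p}_{z_0}\|^2$ (using $\|\mathfrak{p}_{\overline{z_0}}\|=\|\mathfrak{p}_{z_0}\|$), and the cross terms $[\mathfrak{p}_{z_0},\mathfrak{p}_{\overline{z_0}}]=[\mathfrak{p}_{\overline{z_0}},\mathfrak{p}_{z_0}]=0$. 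Applying $[\,\cdot\,,\mathfrak{p}_{z_0}]$ and $[\,\cdot\,,\mathfrak{p}_{\overline{z_0}}]$ to the decomposition \eqref{eq:p} kills $s_\lambda$ (since $s_\lambda\in D(T)$) and isolates one coefficient at a time, yielding
$$s_\lambda^+=\frac{D(\lambda,\overline{z_0})}{2i\Im(z_0)\|\mathfrak{p}_{z_0}\|^2},\qquad s_\lambda^-=-\frac{D(\lambda,z_0)}{2i\Im(z_0)\|\mathfrak{p}_{z_0}\|^2}.$$
The identical scheme applied to \eqref{eq:q} produces the formulas \eqref{eq:r+-} for $r_\lambda^\pm$.

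The only real obstacle is keeping the complex conjugation bookkeeping clean: one must remember that $\overline{p_n(z)}=p_n(\overline z)$ (so the natural $\ell^2$ inner products automatically produce the two-variable functions evaluated at $(\lambda,\overline{z_0})$ or $(\lambda,z_0)$), and one must recognize the exact form of \eqref{eq:D} and \eqref{eq:C} hidden in the resulting sums. Once this is set up, the orthogonality-like structure of $[\cdot,\cdot]$ on $\Delta^+(z_0)\oplus\Delta^-(z_0)$ makes the extraction of each $s_\lambda^\pm, r_\lambda^\pm$ essentially automatic.
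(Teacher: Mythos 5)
Your proof is correct and is essentially the paper's own argument in different packaging: since $[c,\mathfrak{p}_{z_0}]=\langle (T^{*}-\overline{z_0}I)c,\mathfrak{p}_{z_0}\rangle$ for $c\in D(T^{*})$, your pairing with the boundary form against $\mathfrak{p}_{z_0}$ and $\mathfrak{p}_{\overline{z_0}}$ is the same computation as the paper's, which applies $T^{*}-\overline{z_0}I$ (resp.\ $T^{*}-z_0I$) to the decomposition and reads off the coefficient as the orthogonal projection onto $\Delta^{+}(z_0)$ (resp.\ $\Delta^{-}(z_0)$); your vanishing of $[s_\lambda,\mathfrak{p}_{z_0}]$ is exactly the orthogonality $(T-\overline{z_0}I)(D(T))\perp\ker(T^{*}-z_0I)$ used there. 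All the identifications with $C$ and $D$ and the normalizing factor $2i\Im(z_0)\|\mathfrak{p}_{z_0}\|^{2}$ check out.
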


\begin{proof} Applying the operator $T^*-\overline{z_0}I$ to the Equation \eqref{eq:p} gives
$$
(T^*-\overline{z_0}I)\mathfrak{p}_{\la}=(T-\overline{z_0}I)s_\la +s_\la^+(z_0-\overline{z_0})\mathfrak{p}_{z_0},
$$
which is the splitting of the left-hand side   according to the orthogonal decomposition
\begin{equation}\label{eq:os}
\ell^2=(T-\overline{z_0}I)(D(T)) \oplus \Delta^+(z_0).
\end{equation}
Therefore, $s_\la^+(z_0-\overline{z_0})\mathfrak{p}_{z_0}$ is the orthogonal projection of 
$(T^*-\overline{z_0}I)\mathfrak{p}_{\la}$ onto  $\Delta^+(z_0)$, and hence
$$
2i\Im(z_0)s_\la^+\mathfrak{p}_{z_0}=\langle(T^*-\overline{z_0}I)\mathfrak{p}_{\la},\mathfrak{p}_{z_0}\rangle
\frac{\mathfrak{p}_{z_0}}{||\mathfrak{p}_{z_0}||^2},
$$
which gives the first formula in \eqref{eq:s+-}. The second formula is obtained similarly by applying the operator $(T^*-z_0I)$ to the Equation \eqref{eq:p}. Notice that $||\mathfrak{p}_{z_0}||=||\mathfrak{p}_{\overline{z_0}}||$.

Applying the operator $T^*-\overline{z_0}I$ to the Equation \eqref{eq:q} gives
$$
(T^*-\overline{z_0}I)\mathfrak{q}_{\la}=(T-\overline{z_0}I)r_\la +r_\la^+(z_0-\overline{z_0})\mathfrak{p}_{z_0},
$$
which is the splitting of the left-hand side   according to the orthogonal decomposition
\eqref{eq:os}.
Therefore, $r_\la^+(z_0-\overline{z_0})\mathfrak{p}_{z_0}$ is the orthogonal projection of 
$(T^*-\overline{z_0}I)\mathfrak{q}_{\la}$ onto  $\Delta^+(z_0)$, and hence
$$
2i\Im(z_0)r_\la^+\mathfrak{p}_{z_0}=\langle(T^*-\overline{z_0}I)\mathfrak{q}_{\la},\mathfrak{p}_{z_0}\rangle
\frac{\mathfrak{p}_{z_0}}{||\mathfrak{p}_{z_0}||^2},
$$
which gives the first formula in \eqref{eq:r+-} because $T^*(\mathfrak{q}_{\la})=\la\mathfrak{q}_{\la}+e_0$ by \eqref{eq:p,q}. The second formula is obtained similarly by applying the operator $(T^*-z_0I)$ to the Equation \eqref{eq:q}.
\end{proof}

\begin{cor}\label{thm:pqnotinDA} For all  $\la\in\C$  we have $\mathfrak{p}_{\la},\mathfrak{q}_{\la}\notin D(T)$. 
\end{cor}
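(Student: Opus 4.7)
My plan is to deduce the two non-membership statements directly from Proposition~\ref{thm:pq} by exploiting the uniqueness of the direct-sum decomposition \eqref{eq:direct} together with the freedom of choice of $z_0$. If $\mathfrak{p}_\lambda\in D(T)$, the fact that \eqref{eq:direct} is a direct sum forces the $\Delta^\pm(z_0)$-components of $\mathfrak{p}_\lambda$ to vanish, i.e.\ $s_\lambda^+=s_\lambda^-=0$, which by \eqref{eq:s+-} is just $D(\lambda,z_0)=D(\lambda,\overline{z_0})=0$. Applying the same reasoning to $\mathfrak{q}_\lambda$ via \eqref{eq:r+-} yields $C(\lambda,z_0)=C(\lambda,\overline{z_0})=0$.

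Next I would let $z_0$ vary over the open upper half-plane. Since $z_0$ and $\overline{z_0}$ then jointly cover $\C\setminus\R$, the entire functions $v\mapsto D(\lambda,v)$ and $v\mapsto C(\lambda,v)$ each vanish on an open set and hence identically on $\C$. For the $\mathfrak{q}_\lambda$ case this already finishes: by definition \eqref{eq:C} one has $C(\lambda,\lambda)=1$, incompatible with $C(\lambda,\cdot)\equiv 0$. For the $\mathfrak{p}_\lambda$ case I would use the factorisation $D(\lambda,v)=(\lambda-v)F_{\mathfrak{p}_\lambda}(v)$ coming from \eqref{eq:D} and \eqref{eq:FGc} to conclude that $F_{\mathfrak{p}_\lambda}$ vanishes for $v\ne\lambda$, hence identically by continuity. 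Evaluating at $v=\overline{\lambda}$ and using that the $p_n$ are real polynomials yields
\[
F_{\mathfrak{p}_\lambda}(\overline{\lambda})=\sum_{n\ge 0}p_n(\lambda)\,\overline{p_n(\lambda)}=\|\mathfrak{p}_\lambda\|^2>0,
\]
the required contradiction.

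The main subtlety is the case $\lambda\in\R$. When $\Im\lambda\ne 0$ one could bypass the varying-$z_0$ argument altogether by choosing $z_0=\lambda$ (respectively $z_0=\overline{\lambda}$) and reading off $s_\lambda^+=1$ or $r_\lambda^-\ne 0$ directly from \eqref{eq:s+-} and \eqref{eq:r+-}, using $D(\lambda,\overline{\lambda})=2i\Im(\lambda)\|\mathfrak{p}_\lambda\|^2$ and $C(\lambda,\lambda)=1$. No such single $z_0$ is admissible for real $\lambda$, which is exactly why the plan above runs through all $z_0$ in the upper half-plane simultaneously and then invokes entirety of the Nevanlinna functions; the decisive structural fact making the uniform argument work is that the identity $F_{\mathfrak{p}_\lambda}(\overline{\lambda})=\|\mathfrak{p}_\lambda\|^2$ holds regardless of whether $\overline{\lambda}$ coincides with $\lambda$.
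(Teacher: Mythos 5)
Your argument is correct, and for the crucial case $\lambda\in\R$ it is genuinely different from the paper's. Both proofs start from Proposition~\ref{thm:pq} and the uniqueness of the components in the direct sum \eqref{eq:direct}, so membership of $\mathfrak{p}_\lambda$ (resp.\ $\mathfrak{q}_\lambda$) in $D(T)$ forces $D(\lambda,z_0)=D(\lambda,\overline{z_0})=0$ (resp.\ $C(\lambda,z_0)=C(\lambda,\overline{z_0})=0$). The paper then fixes $z_0$: for $\lambda\notin\R$ it takes $z_0=\lambda$ and reads off $s_{z_0}^+=1$ and $r_{z_0}^-\neq 0$ (exactly your aside), while for $\lambda\in\R$ it invokes two nontrivial external facts — that $z\mapsto D(\lambda,z)$ has only real zeros (Theorem~\ref{thm:zeros}, going back to Berg--Christensen) to kill $s_\lambda^\pm$, and the Pick property of $B/D$ (Proposition~\ref{thm:Pick}) together with \eqref{eq:C1} and \eqref{eq:det=1*} to show $C(\lambda,z_0)\neq 0$. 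You instead let $z_0$ range over the whole upper half-plane, conclude via the identity theorem that $D(\lambda,\cdot)$, resp.\ $C(\lambda,\cdot)$, would vanish identically, and refute this by the elementary evaluations $C(\lambda,\lambda)=1$ from \eqref{eq:C} and $D(\lambda,v)=(\lambda-v)\sum_n p_n(\lambda)p_n(v)$ with $\sum_n p_n(\lambda)p_n(\overline{\lambda})=\|\mathfrak{p}_\lambda\|^2>0$ from \eqref{eq:D}. Your route is more self-contained: it treats real and non-real $\lambda$ uniformly and needs neither the zero-location theorem nor the Pick-function machinery, at the mild cost of an appeal to the identity theorem for entire functions. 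The paper's route, by contrast, extracts more information along the way (explicit nonvanishing of the individual coefficients $s_\lambda^\pm$, $r_\lambda^\pm$ for a fixed $z_0$), which is reused in the proof of Theorem~\ref{thm:pqmain}.
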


\begin{proof} For $\la =z_0$ we get from \eqref{eq:p} and \eqref{eq:s+-}
$$
s_{z_0}=0,\quad s_{z_0}^+=1,\quad s_{z_0}^-=0,
$$
showing that $\mathfrak{p}_{z_0}\notin D(T)$. The case $\la=\overline{z_0}$ follows because $D(T)$ is closed under complex conjugation. Since $z_0$ in the upper half-plane is arbitrary, the  assertion about $\mathfrak{p}_{\la}$ follows for $\la\in\C\setminus\R$. 

For $\la\in\R$ we note that $s_\la^-=\overline{s_\la^+}\neq 0$ because $z\mapsto D(\la,z)$ has only real zeros, cf. \cite[Theorem 3]{B:C}
or Theorem~\ref{thm:zeros}.

For $\la =z_0$ we get from \eqref{eq:r+-} that
$$
 r_{z_0}^-=\frac{-1}{2i\Im(z_0)||\mathfrak{p}_{z_0}||^2}\neq 0,
$$
showing that $\mathfrak{q}_{z_0}\notin D(T)$ and hence also $\mathfrak{q}_{\overline{z_0}}\notin D(T)$. Since $z_0$ in the upper half-plane is arbitrary, the assertion about $\mathfrak{q}_{\la}$ follows for $\la\in\C\setminus\R$. 

For $\la\in\R$ we note that $r_\la^{-}=\overline{r_\la^+}$, and by \eqref{eq:C1} we have
$$
C(\la,z_0)=D(z_0)[A(\la)-\rho C(\la)]\neq 0,\quad \rho:=B(z_0)/D(z_0).
$$
because $D$ has only real zeros. Furthermore, $\Im\rho>0$ because $B/D$ is a Pick function, cf. Proposition~\ref{thm:Pick}, so also the second factor is non-zero.
\end{proof}

\begin{rem}\label{thm:simple}{\rm Concerning Corollary~\ref{thm:pqnotinDA}, it is clear that $\mathfrak{p}_\la\notin D(T)$ for $\la\notin\R$ because otherwise  $\mathfrak{p}_\la$ would be an eigenvector for $T$ with eigenvalue $\la$, and as $T$ is symmetric, the eigenvalues are real. 

A small modification yields also that $\mathfrak{q}_\la\notin D(T)$ for $\la\notin\R$. 
In fact, otherwise by symmetry of $T$
\begin{equation}\label{eq:sym}
\langle T\mathfrak{q}_\la, \mathfrak{q}_\la\rangle=\langle \mathfrak{q}_\la,T \mathfrak{q}_\la\rangle.
\end{equation}
The left-hand side of \eqref{eq:sym} equals $\langle \la\mathfrak{q}_\la+e_0, \mathfrak{q}_\la\rangle=
\la||\mathfrak{q}_\la||^2$
because $\langle e_0,\mathfrak{q}_\la\rangle=0$.

Similarly, the right-hand side of \eqref{eq:sym} equals $\overline{\la}||\mathfrak{q}_\la||^2$,
and finally $\la$ must be real. We show later that $(T, D(T))$ has no eigenvalues at all, cf. \eqref{eq:reg}.
}
\end{rem}

{\it Proof of Theorem~\ref{thm:pqmain}}.

Corollary~\ref{thm:pqnotinDA} proves the first assertion,  and from this assertion it is clear that there exists at most one number $\alpha$ satisfying $\mathfrak{p}_u+\alpha\mathfrak{p}_v\in D(T)$, and similarly with $\beta, \gamma$.

 Let us now prove assertion (i) of the theorem.

By \eqref{eq:p} we get
$$
\mathfrak{p}_u+\alpha \mathfrak{p}_v=(s_u+\alpha s_v) + (s_u^+ +\alpha s_v^+)\mathfrak{p}_{z_0}+
(s_u^- +\alpha s_v^-)\mathfrak{p}_{\overline{z_0}},
$$
so $\mathfrak{p}_u+\alpha\mathfrak{p}_v\in D(T)$ if and only if 
$$
s_u^+ +\alpha s_v^+=s_u^- +\alpha s_v^-=0,
$$
which by \eqref{eq:s+-} is equivalent to 
\begin{equation}\label{eq:Di}
D(u,\overline{z_0})+\alpha D(v,\overline{z_0})=D(u,z_0)+\alpha D(v,z_0)=0.
\end{equation}
 The determinant of this linear system is 
\begin{equation*}\label{eq:det}
\mathcal D:=D(u,\overline{z_0})D(v,z_0)-D(u,z_0)D(v,\overline{z_0})
\end{equation*}
and using Lemma~\ref{thm:doubledet} with
$$
x=\begin{pmatrix} B(u)\\D(u) \end{pmatrix},\; y=\begin{pmatrix}B(\overline{z_0})\\
D(\overline{z_0})\end{pmatrix},\; z=\overline{y},\;w=\begin{pmatrix} B(v)\\D(v)\end{pmatrix},
$$
we get from Corollary~\ref{thm:2to1} and \eqref{eq:D1}
$$
\mathcal D=\left|\begin{array}{cc}
 B(u)&\;B(v)\\D(u)&\;D(v)\end{array}\right|\left|\begin{array}{cc}
 B(\overline{z_0})&\;B(z_0)\\D(\overline{z_0})&\;D(z_0)\end{array}\right|
=D(u,v)D(\overline{z_0},z_0).
$$
However, $D(\overline{z_0},z_0)=-2\Im(z_0)||\mathfrak{p}_{z_0}||^2\neq 0$ so $\mathcal D=0$ iff $D(u,v)=0$.
Therefore, if $\alpha$ is a solution to \eqref{eq:Di} we have $D(u,v)=0$. Suppose next that  
$D(u,v)=0$. To see that \eqref{eq:Di} has a solution $\alpha$, we notice that $D(v,\overline{z_0})$ and $D(v,z_0)$ cannot both be zero.
In fact, defining $\rho:=B(z_0)/D(z_0)$ we have $\Im(\rho)>0$ because $B/D$ is a Pick function, cf. Proposition~\ref{thm:Pick}, and
by \eqref{eq:D1} $D(v,\overline{z_0})=0$ iff $B(v)=\overline{\rho}D(v)$ while $D(v,z_0)=0$ iff $B(v)=\rho D(v)$, so both equations cannot hold. Here we use that $B(v)=D(v)=0$ is impossible because of \eqref{eq:det=1*}.

If $D(v,z_0)\neq 0$, then $\alpha:=-D(u,z_0)/D(v,z_0)$ satisfies \eqref{eq:Di} because $\mathcal D=0$. Similarly $\alpha:=-D(u,\overline{z_0})/D(v,\overline{z_0})$ satisfies \eqref{eq:Di} if $D(v,\overline{z_0})\neq 0$.

Furthermore, in the case $D(v,z_0)\neq 0$ we get using \eqref{eq:D3} and $D(u,v)=0$ that
$$
D(u,z_0)=D(u,v)C(v,z_0)-B(u,v)D(v,z_0)=-B(u,v)D(v,z_0),
$$
so finally $\alpha=B(u,v)$. The case $D(v,\overline{z_0})\neq 0$ is similar.

\medskip
{\it Proof of (ii)}:

By \eqref{eq:q} we get
$$
\mathfrak{q}_u+\beta \mathfrak{q}_v=(r_u+\beta r_v) + (r_u^+ +\beta r_v^+)\mathfrak{p}_{z_0}+
(r_u^- +\beta r_v^-)\mathfrak{p}_{\overline{z_0}},
$$
so $\mathfrak{q}_u+\beta\mathfrak{q}_v\in D(T)$ if and only if 
$$
r_u^+ +\beta r_v^+=r_u^- +\beta r_v^-=0,
$$
which by \eqref{eq:r+-} is equivalent to 
\begin{equation}\label{eq:Ci}
C(u,\overline{z_0})+\beta C(v,\overline{z_0})=C(u,z_0)+\beta C(v,z_0)=0.
\end{equation}
 The determinant of this linear system is 
\begin{equation*}\label{eq:det1}
{\mathcal D}_1:=C(u,\overline{z_0})C(v,z_0)-C(u,z_0)C(v,\overline{z_0})
\end{equation*}
and using Lemma~\ref{thm:doubledet} with
$$
x=\begin{pmatrix} A(u)\\C(u) \end{pmatrix},\; y=\begin{pmatrix}B(\overline{z_0})\\
D(\overline{z_0})\end{pmatrix},\; z=\overline{y},\;w=\begin{pmatrix} A(v)\\C(v)\end{pmatrix}
$$
we get from Corollary~\ref{thm:2to1} combined with \eqref{eq:A1}, \eqref{eq:C1}, \eqref{eq:D1}
$$
{\mathcal D}_1=\left|\begin{array}{cc}
 A(u)&\;A(v)\\C(u)&\;C(v)\end{array}\right|\left|\begin{array}{cc}
 B(\overline{z_0})&\;B(z_0)\\D(\overline{z_0})&\;D(z_0)\end{array}\right|
=A(u,v)D(\overline{z_0},z_0).
$$
As in case (i) we see that
${\mathcal D}_1=0$ iff $A(u,v)=0$.
Therefore, if $\beta$ is a solution to \eqref{eq:Ci}, we have  $A(u,v)=0$. Suppose next that 
$A(u,v)=0$. To see that \eqref{eq:Ci} has a solution $\beta$, we notice as in (i) that $C(v,\overline{z_0})$ and $C(v,z_0)$ cannot both be zero. For this we use that $A/C$ is a Pick function by Proposition~\ref{thm:Pick}.

If $C(v,z_0)\neq 0$, then $\beta:=-C(u,z_0)/C(v,z_0)$ satisfies \eqref{eq:Ci}. Similarly $\beta:=-C(u,\overline{z_0})/C(v,\overline{z_0})$ satisfies \eqref{eq:Ci} if $C(v,\overline{z_0})\neq 0$.

Furthermore, in the case $C(v,z_0)\neq 0$ we get using \eqref{eq:C3} and $A(u,v)=0$ that
$$
C(u,z_0)=C(u,v)C(v,z_0)-A(u,v)D(v,z_0)=C(u,v)C(v,z_0),
$$
so finally $\beta=-C(u,v)$. The case $C(v,\overline{z_0})\neq 0$ is similar.

\medskip
{\it Proof of (iii)}: By \eqref{eq:p} and \eqref{eq:q} we get
$$
\mathfrak{p}_u+\gamma \mathfrak{q}_v=(s_u+\gamma r_v) + (s_u^+ +\gamma r_v^+)\mathfrak{p}_{z_0}+
(s_u^- +\gamma r_v^-)\mathfrak{p}_{\overline{z_0}},
$$
so $\mathfrak{p}_u+\gamma\mathfrak{q}_v\in D(T)$ if and only if 
$$
s_u^+ +\gamma r_v^+=s_u^- +\gamma r_v^-=0,
$$
which by \eqref{eq:s+-} and \eqref{eq:r+-} is equivalent to 
\begin{equation}\label{eq:DCi}
D(u,\overline{z_0})+\gamma C(v,\overline{z_0})=D(u,z_0)+\gamma C(v,z_0)=0.
\end{equation}

 The determinant of this linear system is 
\begin{equation*}\label{eq:det2}
{\mathcal D}_2:=D(u,\overline{z_0})C(v,z_0)-D(u,z_0)C(v,\overline{z_0})
\end{equation*}
and using Lemma~\ref{thm:doubledet} with
$$
x=\begin{pmatrix} B(u)\\D(u) \end{pmatrix},\; y=\begin{pmatrix}B(\overline{z_0})\\
D(\overline{z_0})\end{pmatrix},\; z=\overline{y},\;w=\begin{pmatrix} A(v)\\C(v)\end{pmatrix}
$$
we get from Corollary~\ref{thm:2to1} combined with \eqref{eq:B1}, \eqref{eq:C1}, \eqref{eq:D1}
$$
{\mathcal D}_2=\left|\begin{array}{cc}
 B(u)&\;A(v)\\D(u)&\;C(v)\end{array}\right|\left|\begin{array}{cc}
 B(\overline{z_0})&\;B(z_0)\\D(\overline{z_0})&\;D(z_0)\end{array}\right|
=B(u,v)D(\overline{z_0},z_0).
$$
As in case (i) we see that
${\mathcal D}_2=0$ iff $B(u,v)=0$.
Therefore, if $\gamma$ is a solution to \eqref{eq:DCi}, we have $B(u,v)=0$. Suppose next that $B(u,v)=0$. We see like in (ii) that if $C(v,z_0)\neq 0$, then $\gamma:=-D(u,z_0)/C(v,z_0)$ satisfies \eqref{eq:DCi}, and if $C(v,\overline{z_0})\neq 0$, then $\gamma:=-D(u,\overline{z_0})/C(v,\overline{z_0})$ satisfies \eqref{eq:DCi}.

We finally see that in both cases $\gamma=-D(u,v)$ because of \eqref{eq:D3}.
$\qquad\square$

\begin{rem}\label{thm:trunc}{\rm The case (ii) can be deduced from case (i) by using the observation that
the polynomials $(q_{n+1}(x)/q_1(x))_{n\ge 0}$ are the orthonormal polynomials associated with the truncated Jacobi matrix $J^{(1)}$ obtained from $J$ by removing the first row and column. See \cite[p. 28]{Ak}. We have
$$
J^{(1)}(Sc)=S(Jc)-a_0\langle c, e_0\rangle,\quad c\in\mathcal F,
$$
where $S$ is the bounded shift operator in $\ell^2$ given by $(Sc)_n:=c_{n+1}, n\ge 0$. 
  
If we let $(T^{(1)}, D(T^{(1)}))$ denote the Jacobi operator associated with $J^{(1)}$, one can prove that 
$$
v\in D(T^{(1)})\iff v=Su, u\in D(T)
$$
and 
$$
T^{(1)}(Su)=S(Tu)-a_0\langle u, e_0\rangle,\quad u\in D(T).
$$
}
\end{rem}

\section{Self-adjoint extensions of the Jacobi operator}

As before $(s_n)$ is an indeterminate moment sequence with $s_0=1$. The corresponding Jacobi operator $(T,D(T))$ has deficiency indices $(1,1)$ and the self-adjoint extensions in
$\ell^2$ can be parametrized as the operators $T_t, t\in\R^*=\R\cup \{\infty\}$ with domain  
\begin{equation}\label{eq:domTt}
D(T_t)=D(T)\oplus \C  (\mathfrak{q}_0 + t\mathfrak{p}_0) \;\mbox{for}\;t\in\R,\quad
D(T_\infty)=D(T)\oplus \C \mathfrak{p}_0
\end{equation}
and defined by the restriction of $T^*$ to the domain, cf. \cite[Theorem 6.23]{Sch}.
We recall that $\mathfrak{p}_0, \mathfrak{q}_0$ are defined in \eqref{eq:frak}.

The purpose of this section is to give some results about the domains $D(T_t)$ of the self-adjoint operators $T_t, t\in\R^*$.

For $t\in\R^*$ we define the solutions to the moment problem
\begin{equation}\label{eq:Next}
\mu_t(\cdot):=\langle E_t(\cdot)e_0,e_0\rangle,
\end{equation}
where $E_t(\cdot)$ is the spectral measure of the self-adjoint operator $T_t$.

The measures $\mu_t, t\in\R^*$ are precisely those measures $\mu\in V$  for which the polynomials $\C[x]$ are dense in $L^2(\mu)$ according to a famous theorem of M. Riesz, cf. \cite{Ri}, and they are called N-extremal  in \cite{Ak} and von Neumann solutions in \cite{S}.  They form a closed subset of $\ext(V)$, the set of extreme points of the convex set $V$. However, $\ext(V)$ is known to be a dense subset of $V$.  They are characterized by the formula
\begin{equation}\label{eq:Npar}
\int\frac{d\mu_t(x)}{x-z}=-\frac{A(z)+tC(z)}{B(z)+tD(z)},\quad z\in \C\setminus\R, t\in\R^*,
\end{equation}
where $A,\ldots,D$ are the entire functions given in \eqref{eq:A-D}, cf. \cite[Theorem 7.6]{Sch}. Recall that \eqref{eq:det=1*} holds.

We summarize some of the properties of $\mu_t$, which can be found in \cite{Ak} and \cite{Sch}.

\begin{prop}\label{thm:supportNext}
\begin{enumerate}
\item[(i)] The  solution $\mu_t$ is a discrete measure with support equal to the countable zero set $\Lambda_t$ of the entire function $B(z)+tD(z)$, with the convention that $\Lambda_\infty$ is the zero set of $D$. In particular $\Lambda_t\subset\R$ for $t\in\R^*$.
\item[(ii)] The support of two different N-extremal solutions are disjoint, and each point $x_0\in\R$ belongs to the support of a unique N-extremal measure $\mu_t$, where
$t\in\R^*$ is given  as $t=-B(x_0)/D(x_0)$ if $D(x_0)\neq 0$ and $t=\infty$ if $D(x_0)=0$.
\end{enumerate}
\end{prop}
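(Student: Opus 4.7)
The plan is to exploit the Nevanlinna parametrization \eqref{eq:Npar}: its right-hand side
$$\Phi_t(z):=-\frac{A(z)+tC(z)}{B(z)+tD(z)} \quad (t\in\R), \qquad \Phi_\infty(z):=-\frac{C(z)}{D(z)}$$
is meromorphic on $\C$ and coincides with the Cauchy transform $z\mapsto\int d\mu_t(x)/(x-z)$ on $\C\setminus\R$. My first move is to identify its poles. By \eqref{eq:det=1*}, the entire functions $A+tC$ and $B+tD$ have no common zeros: a simultaneous zero would force $A(z_0)D(z_0)-B(z_0)C(z_0)=0$, contradicting the identity. Hence the poles of $\Phi_t$ are precisely the (simple, as we shall see) zeros of $B+tD$, that is the set $\Lambda_t$.

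To show $\Lambda_t\subset\R$, I would invoke Proposition~\ref{thm:Pick}: $B/D$ is a Pick function. For finite $t\in\R$, a non-real zero of $B+tD$ would give $(B/D)(z_0)=-t\in\R$ at a non-real $z_0$, which is impossible for a Pick function with strict imaginary-part signs on each open half-plane. The case $t=\infty$ is analogous, applied to $D$ itself. The set $\Lambda_t$ is then countable as a discrete zero set of a nonzero entire function.

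For the discreteness assertion in (i), the plan is to apply the Stieltjes inversion formula
$$\mu_t([a,b])=\lim_{\varepsilon\to 0^+}\frac{1}{\pi}\int_a^b \Im\Phi_t(x+i\varepsilon)\,dx$$
at continuity points $a<b$ of $\mu_t$. Since $\Phi_t$ is meromorphic across $\R$, the integrand tends to $0$ uniformly on compact sets away from $\Lambda_t$, while near a pole $x_0\in\Lambda_t$ a standard approximate-identity argument shows the local contribution equals $-\mathrm{Res}_{x_0}\Phi_t$, which is positive because $\Phi_t$ maps the upper half-plane into itself (the Cauchy transform of a positive measure is Nevanlinna). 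Explicitly,
$$\mu_t(\{x_0\})=\frac{A(x_0)+tC(x_0)}{B'(x_0)+tD'(x_0)},$$
with the obvious modification at $t=\infty$, and $\mu_t=\sum_{x_0\in\Lambda_t}\mu_t(\{x_0\})\delta_{x_0}$.

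For (ii), the argument reduces to \eqref{eq:det=1*}. If $x_0\in\Lambda_t\cap\Lambda_s$ with distinct finite $t,s$, subtracting $B(x_0)+tD(x_0)=0$ from $B(x_0)+sD(x_0)=0$ yields $D(x_0)=0$ and then $B(x_0)=0$, contradicting $A(x_0)D(x_0)-B(x_0)C(x_0)=1$; the case where one parameter is $\infty$ is similar. Conversely, any $x_0\in\R$ lies in exactly one $\Lambda_t$, with $t$ given by the stated formula, because \eqref{eq:det=1*} forces $B(x_0)\neq 0$ whenever $D(x_0)=0$. I expect the technical heart of the argument to be the justification of the approximate-identity step in the Stieltjes inversion, in particular controlling the contribution of distant poles so that the sum of atomic masses matches the total mass $\mu_t(\R)=1$; everything else is a clean algebraic consequence of \eqref{eq:det=1*} together with the Pick property of $B/D$.
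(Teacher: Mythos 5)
Your proposal is essentially correct, but note that the paper does not prove this proposition at all: it is stated as a summary of known facts with a pointer to \cite{Ak} and \cite{Sch}, so there is no internal proof to compare against. Your route --- reading off the support from the poles of the meromorphic continuation $\Phi_t$ of the Cauchy transform in \eqref{eq:Npar}, using \eqref{eq:det=1*} to rule out common zeros of $A+tC$ and $B+tD$, the Pick property of $B/D$ (Proposition~\ref{thm:Pick}) to force $\Lambda_t\subset\R$, and Stieltjes inversion for atomicity --- is precisely the classical argument in those references, so you have reconstructed the intended proof. Two small comments. First, the step you single out as the ``technical heart'' (matching the sum of residue masses to the total mass $1$) is not actually needed for the statement as given: once Stieltjes inversion (or simply the fact that the Cauchy transform of a positive measure is holomorphic off its support, and here is also real on $\R\setminus\Lambda_t$) yields $\supp(\mu_t)\subseteq\Lambda_t$, discreteness of $\Lambda_t$ already forces $\mu_t$ to be purely atomic, and the atoms automatically sum to $\mu_t(\R)=1$; the reverse inclusion $\Lambda_t\subseteq\supp(\mu_t)$ follows because the Cauchy transform cannot have a pole off the support. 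Second, the simplicity of the zeros of $B+tD$, which you defer, follows from the same Pick property: at a real zero $x_0$ with $D(x_0)\neq 0$ one has $(B/D)'(x_0)=(B'(x_0)+tD'(x_0))/D(x_0)>0$, so $B'(x_0)+tD'(x_0)\neq 0$ (and similarly at $t=\infty$). With these two remarks your argument is complete and agrees with the cited sources.
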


Let us consider the vector space
\begin{equation}\label{eq:E} 
\mathcal E:=\{F_c(z)=\sum_{n=0}^\infty c_np_n(z) \mid c\in\ell^2\}
\end{equation}
of entire functions, cf. \eqref{eq:FGc}.  It is a Hilbert space under the norm
$$
||F_c||^2=\sum_{n=0}^\infty |c_n|^2=\int |F_c(x)|^2\,d\mu(x),
$$
where $\mu\in V$ can be arbitrary. It is a reproducing kernel Hilbert space of functions with the reproducing kernel
$$
K(u,v):=\sum_{n=0}^\infty p_n(u)p_n(v),\quad u,v\in\C,
$$ 
in the sense that
$$
\int K(u,x)F_c(x)\,d\mu(x)=F_c(u),\quad \mu\in V, u\in\C.
$$
Note that $(p_n)$ is an orthonormal basis of $\mathcal E$, and the mapping $c\mapsto F_c$  
is a unitary operator of the Hilbert space $\ell^2$ onto $\mathcal E$.

For each N-extremal measure $\mu_t$ the mapping $c\mapsto F_c|_{\supp(\mu_t)}$ is a unitary operator of $\ell^2$ onto $L^2(\mu_t)$. The inverse mapping is given by
$$
f\mapsto \left(\langle f, p_n\rangle_{L^2(\mu_t)}\right)_{n\ge 0},\quad f\in L^2(\mu_t),
$$ 
and
\begin{equation}\label{eq:pointwise}
f(x)=\sum_{n=0}^\infty \langle f, p_n\rangle_{L^2(\mu_t)}p_n(x),\quad x\in\supp(\mu_t).
\end{equation}
The series in \eqref{eq:pointwise} converges to $f$ in $L^2(\mu_t)$ and converges also locally uniformly for $x\in\C$, but $f$ is apriori only defined on $\supp(\mu_t)$, so the equality holds pointwise for $x\in\supp(\mu_t)$. The series represents a holomorphic extension of $f$ to all of $\C$.
 
The self-adjoint operator $T_t$ from 
\eqref{eq:domTt} is unitarily equivalent with the multiplication operator $M_{\mu_t}$ on
$L^2(\mu_t)$ given by
\begin{equation*}\label{eq:Mt}
M_{\mu_t}f(x)=xf(x),\quad f\in L^2(\mu_t), x\in \supp(\mu_t).
\end{equation*}

\begin{thm}\label{thm:pro1} Let $\mu_t$ be an N-extremal measure and let $\lambda\in \C\setminus \supp\mu_t$.  
 Then
\begin{equation}\label{eq:imp}w_{\mu_t}(\lambda)\mathfrak{p}_\lambda+\mathfrak{q}_\lambda \in D(T_t),
\end{equation}
 where
\begin{equation}\label{eq:w}
w_{\mu_t}(\lambda):=\int {1\over x-\lambda}\,d\mu_t(x) =-{C(\lambda,x)\over D(\lambda,x)},\quad x\in \supp\mu_t.
\end{equation}
In particular, the ratio $C(\lambda,x)/D(\lambda,x)$
  does not depend on $x\in \supp\mu_t.$
\end{thm}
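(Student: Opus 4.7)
The plan is to transfer the problem to $L^2(\mu_t)$ via the unitary operator $U_t\colon \ell^2 \to L^2(\mu_t)$ with $U_t(c) = F_c|_{\supp \mu_t}$, under which $T_t$ is unitarily equivalent to multiplication by the variable. The key observation is that $w_{\mu_t}(\lambda)\mathfrak{p}_\lambda + \mathfrak{q}_\lambda$ should be the $\ell^2$-preimage of the function $f_\lambda(y) := 1/(y-\lambda)$ in $L^2(\mu_t)$.

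First I would verify that $f_\lambda \in L^2(\mu_t)$: since $\supp \mu_t = \Lambda_t$ is a discrete subset of $\R$ (the zero set of an entire function), $\lambda \notin \Lambda_t$ forces positive distance from $\lambda$ to $\Lambda_t$, and together with $\mu_t(\R) = 1$ this yields finiteness of both $\|f_\lambda\|_{L^2(\mu_t)}$ and $\int f_\lambda\, d\mu_t = w_{\mu_t}(\lambda)$. I would then compute the Fourier coefficients of $f_\lambda$ with respect to the orthonormal basis $(p_n)$: writing $p_n(y) = p_n(\lambda) - (p_n(\lambda) - p_n(y))$ in the integrand and invoking the definition of $q_n(\lambda)$ gives
\begin{equation*}
\int \frac{p_n(y)}{y - \lambda}\, d\mu_t(y) = w_{\mu_t}(\lambda)\, p_n(\lambda) + q_n(\lambda).
\end{equation*}
By the unitarity of $U_t$ described after \eqref{eq:pointwise}, the $\ell^2$-preimage of $f_\lambda$ is exactly $w_{\mu_t}(\lambda)\mathfrak{p}_\lambda + \mathfrak{q}_\lambda$. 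Since $y f_\lambda(y) = 1 + \lambda f_\lambda(y)$ manifestly belongs to $L^2(\mu_t)$, one has $f_\lambda \in D(M_{\mu_t})$, and transferring back via $U_t$ yields \eqref{eq:imp}.

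For the identity $w_{\mu_t}(\lambda) = -C(\lambda,x)/D(\lambda,x)$, I would combine the Nevanlinna parametrization \eqref{eq:Npar} with the one-variable factorizations \eqref{eq:C1} and \eqref{eq:D1}, namely
\begin{equation*}
C(\lambda,x) = A(\lambda)D(x) - C(\lambda)B(x),\qquad D(\lambda,x) = B(\lambda)D(x) - D(\lambda)B(x).
\end{equation*}
For $t \in \R$ and $x \in \supp \mu_t$, we have $B(x) + tD(x) = 0$, and $D(x) \neq 0$ (otherwise $B(x) = 0$ as well, contradicting \eqref{eq:det=1*}); substituting $B(x) = -tD(x)$ factors $D(x)$ out of both numerator and denominator, yielding $C(\lambda,x)/D(\lambda,x) = (A(\lambda) + tC(\lambda))/(B(\lambda) + tD(\lambda)) = -w_{\mu_t}(\lambda)$. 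For $t = \infty$ one has $D(x) = 0$ and $B(x) \neq 0$, and the analogous computation gives $C(\lambda,x)/D(\lambda,x) = C(\lambda)/D(\lambda) = -w_{\mu_\infty}(\lambda)$. The $x$-independence of the ratio is then immediate. No step is expected to be delicate; the only mild care required is in separating the cases $t \in \R$ and $t = \infty$ in this last computation.
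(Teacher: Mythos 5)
Your proof of the membership statement \eqref{eq:imp} is essentially the paper's own argument: boundedness of $(x-\lambda)^{-1}$ and $x(x-\lambda)^{-1}$ on the discrete support, the Fourier coefficient computation $\int p_n(y)(y-\lambda)^{-1}\,d\mu_t(y)=w_{\mu_t}(\lambda)p_n(\lambda)+q_n(\lambda)$, and transfer through the unitary equivalence with $M_{\mu_t}$. Where you diverge is the second half. The paper multiplies the expansion \eqref{eq:x} by $(x-\lambda)$ and recognizes the resulting series as the two-variable functions from \eqref{eq:C} and \eqref{eq:D}, obtaining $w_{\mu_t}(\lambda)D(\lambda,x)+C(\lambda,x)=0$ directly for $x\in\supp\mu_t$, and then rules out $D(\lambda,x)=0$ via \eqref{eq:det=1}; this is self-contained and uses nothing beyond what was already established. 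You instead combine the Nevanlinna parametrization \eqref{eq:Npar} with the factorizations \eqref{eq:C1}, \eqref{eq:D1} and the relation $B(x)+tD(x)=0$ on the support --- in effect running the paper's own Remark~\ref{thm:alt} in reverse. Your computation is correct (and your factorization $D(\lambda,x)=D(x)(B(\lambda)+tD(\lambda))$ even yields $D(\lambda,x)\neq 0$ for free), but it leans on the external theorem \eqref{eq:Npar}, and there is one small point to make explicit: \eqref{eq:Npar} is stated only for $z\in\C\setminus\R$, whereas you apply it at a possibly real $\lambda\notin\supp\mu_t$; this is justified because both sides are holomorphic on $\C\setminus\Lambda_t$ and agree off the real axis, but the analytic continuation should be said. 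The trade-off: your route is shorter if one takes the parametrization as given, while the paper's route derives the identity from first principles and, as Remark~\ref{thm:alt} shows, actually reproves \eqref{eq:Npar} as a byproduct.
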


\begin{proof} Since $\lambda \notin \supp\mu_t$ 
the functions $(x-\lambda)^{-1}$ and $x(x-\lambda)^{-1}$ are bounded on $\supp\mu_t$ and in particular they belong to $L^2(\mu_t)$. Thus $(x-\lambda)^{-1}\in D(M_{\mu_t})$ and we find
$$
\int \frac{p_n(x)}{x-\lambda}\,d\mu_t(x)=w_{\mu_t}(\lambda)p_n(\lambda)+q_n(\lambda),
$$ 
where $w_{\mu_t}(\lambda)$ is given by the first equality of  (\ref{eq:w}). 
Moreover, by \eqref{eq:pointwise}
\begin{equation}\label{eq:x}(x-\lambda)^{-1}=\sum_{n=0}^\infty [w_{\mu_t}(\lambda)p_n(\lambda)+q_n(\lambda)]p_n(x), \quad x\in \supp\mu_t.
\end{equation}
In view of the unitary equivalence of $T_t$ and $M_{\mu_t}$ we get (\ref{eq:imp}).
Multiplying \eqref{eq:x} sidewise by $x-\lambda$ gives 
$$
1=w_{\mu_t}(\lambda)(x-\lambda)\sum_{n=0}^\infty p_n(\lambda)p_n(x)+(x-\lambda) \sum_{n=0}^\infty q_n(\lambda)p_n(x),\quad x\in\supp(\mu_t).
$$
By \eqref{eq:C} and \eqref{eq:D} we therefore get
$$
w_{\mu_t}(\lambda)D(\lambda,x)+C(\lambda,x)=0,\quad x\in \supp\mu_t.
$$
Assume $D(\lambda,x)=0.$ Then $C(\lambda,x)=0,$ but this gives a contradiction to \eqref{eq:det=1}. 
Hence $D(\lambda,x)\neq 0$ and 
$$
w_{\mu_t}(\lambda)=-{C(\lambda, x)\over D(\lambda,x)},\quad x\in \supp\mu_t,
$$
which gives the second part of (\ref{eq:w}).
\end{proof}

\begin{rem}\label{thm:alt} {\rm Using the formulas \eqref{eq:C1} and \eqref{eq:D1} in the last expression in \eqref{eq:w}, we get formula \eqref{eq:Npar} with 
$t=-B(x)/D(x)$ independent of $x\in\supp(\mu_t)$ if $D(x)\neq 0$, and $t=\infty$ if $D(x)=0$.

Note also that $w_{\mu_t}(\lambda)\mathfrak{p}_\lambda+\mathfrak{q}_\lambda \notin D(T)$ by Theorem~\ref{thm:pqmain} (iii) because $B(\la,\la)=-1$.
}
\end{rem}

We know from Theorem~\ref{thm:pqmain} that $\mathfrak{p}_\la, \mathfrak{q}_\la\notin D(T)$ for every $\la\in\C$. We shall now clarify when  $\mathfrak{p}_\la, \mathfrak{q}_\la$  belong to the domain of the self-adjoint extension $T_t$ associated with the N-extremal measure $\mu_t$.
 
\begin{thm}\label{thm:D(T_t)} For $\la\in\C$ and $t\in\R^*$ we have
\begin{equation}\label{eq:pD(T_t)}
\mathfrak{p}_\la\in D(T_t) \iff D(\la,x)=0 \;\forall x\in\supp(\mu_t)\iff \la\in\supp(\mu_t).
\end{equation}
\begin{equation}\label{eq:qD(T_t)}
\mathfrak{q}_\la\in D(T_t) \iff C(\la,x)=0 \;\forall x\in\supp(\mu_t).
\end{equation}
\end{thm}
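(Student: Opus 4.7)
The plan is to work throughout via the unitary equivalence between $T_t$ and the multiplication operator $M_{\mu_t}$ on $L^2(\mu_t)$, under which $e_n\mapsto p_n|_{\supp(\mu_t)}$. A vector $c\in\ell^2$ lies in $D(T_t)$ exactly when its image $U_t(c)(x)=\sum_n c_np_n(x)$, which by \eqref{eq:pointwise} converges pointwise on $\supp(\mu_t)$, is still sent to an element of $L^2(\mu_t)$ after multiplication by $x$. Two concrete identifications drive everything: $U_t(\mathfrak{p}_\la)(x)=K(\la,x):=\sum_n p_n(\la)p_n(x)$, and, by rearranging the definition \eqref{eq:C}, $U_t(\mathfrak{q}_\la)(x)=(C(\la,x)-1)/(\la-x)$ for $x\in\supp(\mu_t)$ with $x\neq\la$.

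I would treat the first chain by handling the middle equivalence first. Since $D(\la,x)=(\la-x)K(\la,x)$, and for distinct atoms $\la,x$ of $\mu_t$ one has $K(\la,x)=0$ (the indicator functions of distinct atoms are orthogonal in $L^2(\mu_t)$ when expanded in the orthonormal basis $(p_n)$), the implication $\la\in\supp(\mu_t)\Rightarrow D(\la,x)=0\ \forall x\in\supp(\mu_t)$ is immediate. Conversely, if $\la\notin\supp(\mu_t)$ then $\la-x\neq 0$ throughout $\supp(\mu_t)$, so $D(\la,\cdot)\equiv 0$ on $\supp(\mu_t)$ would force $K(\la,\cdot)\equiv 0$ on $\supp(\mu_t)$, hence $U_t(\mathfrak{p}_\la)=0$, contradicting $p_0(\la)=1$. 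For the outer equivalence $\mathfrak{p}_\la\in D(T_t)\iff\la\in\supp(\mu_t)$: when $\la\in\supp(\mu_t)$, $K(\la,\cdot)$ is concentrated on the single atom $\la$, so multiplying by $x$ just scales it by $\la$ and keeps it in $L^2(\mu_t)$; conversely, if $\mathfrak{p}_\la\in D(T_t)$ then $T_t\mathfrak{p}_\la=T^*\mathfrak{p}_\la=\la\mathfrak{p}_\la$ by \eqref{eq:p,q}, so $U_t(\mathfrak{p}_\la)$ is a nonzero eigenfunction of $M_{\mu_t}$ with eigenvalue $\la$, which forces $\la\in\supp(\mu_t)$.

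For the second assertion I would use the same template with the non-homogeneous relation $T^*\mathfrak{q}_\la=e_0+\la\mathfrak{q}_\la$. If $\mathfrak{q}_\la\in D(T_t)$, transporting this identity via $U_t$ (and using $U_t(e_0)=p_0\equiv 1$) gives $(x-\la)U_t(\mathfrak{q}_\la)(x)=1$ on $\supp(\mu_t)$, which forces $\la\notin\supp(\mu_t)$ and $U_t(\mathfrak{q}_\la)(x)=1/(x-\la)$; comparing with the explicit expansion $(C(\la,x)-1)/(\la-x)$ yields $C(\la,x)=0$ for every $x\in\supp(\mu_t)$. Conversely, assume $C(\la,x)=0$ on $\supp(\mu_t)$. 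Since $C(\la,\la)=1$, this forces $\la\notin\supp(\mu_t)$; the determinant identity \eqref{eq:det=1} then gives $D(\la,x)\neq 0$ on $\supp(\mu_t)$, and \eqref{eq:w} yields $w_{\mu_t}(\la)=-C(\la,x)/D(\la,x)=0$. Theorem~\ref{thm:pro1} then applies and produces $\mathfrak{q}_\la=w_{\mu_t}(\la)\mathfrak{p}_\la+\mathfrak{q}_\la\in D(T_t)$.

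The main conceptual obstacle is not any individual calculation but setting up the translation table between (i) the pointwise formulas for $U_t(\mathfrak{p}_\la)$ and $U_t(\mathfrak{q}_\la)$ on $\supp(\mu_t)$, (ii) the eigenvalue relations $T^*\mathfrak{p}_\la=\la\mathfrak{p}_\la$ and $T^*\mathfrak{q}_\la=e_0+\la\mathfrak{q}_\la$, and (iii) the algebraic identities $D(\la,x)=(\la-x)K(\la,x)$ and $C(\la,x)-1=(\la-x)\sum_n q_n(\la)p_n(x)$. Once this dictionary is in place, each of the four implications reduces to a one-line check.
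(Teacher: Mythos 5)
Your proof is correct and follows essentially the same route as the paper: the unitary equivalence with the multiplication operator $M_{\mu_t}$ on $L^2(\mu_t)$ together with the relations $T^*\mathfrak{p}_\la=\la\mathfrak{p}_\la$ and $T^*\mathfrak{q}_\la=e_0+\la\mathfrak{q}_\la$. Your only departures are local and both are valid: you justify the equivalence $D(\la,\cdot)\equiv 0$ on $\supp(\mu_t)\iff\la\in\supp(\mu_t)$ directly from the orthogonality of the reproducing kernels at distinct atoms, where the paper instead invokes Remark~\ref{thm:Dspec} (resting on \cite{B:C}), and you obtain the converse direction of \eqref{eq:qD(T_t)} by feeding $w_{\mu_t}(\la)=-C(\la,x)/D(\la,x)=0$ into Theorem~\ref{thm:pro1}, where the paper checks directly that $xh_\la(x)\in L^2(\mu_t)$.
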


\begin{proof} We define the entire functions $g_\la, h_\la\in L^2(\mu_t)$ by
$$
g_\la(x):=\sum_{k=0}^\infty p_k(\la)p_k(x),\quad h_\la(x):=\sum_{k=0}^\infty q_k(\la)p_k(x),\quad x\in\C.
$$
If $\mathfrak{p}_\la\in D(T_t)$ then $(T_t-\la I)\mathfrak{p}_\la=0$ by \eqref{eq:p,q}, because $T_t$ is a restriction of $T^*$. Furthermore, by the unitary equivalence between
$T_t$ and the multiplication operator $M_{\mu_t}$ on $L^2(\mu_t)$ we have $xg_\la(x)\in
L^2(\mu_t)$ and $D(\la,x)=(\la-x)g_\la(x)=0$ in $L^2(\mu_t)$. By discreteness of $\mu_t$ we get $D(\la,x)=0$ for all $x\in\supp(\mu_t)$. The last equivalence of \eqref{eq:pD(T_t)} follows from Remark~\ref{thm:Dspec}. On the other hand, it
is easy to see that the last two equivalent conditions of \eqref{eq:pD(T_t)} imply $\mathfrak{p}_\la\in D(T_t)$, because the zero-function $x\mapsto (\la-x)g_\la(x)$ as well as $\la g_\la(x)$ are in $L^2(\mu_t)$, hence also $xg_\la(x)\in L^2(\mu_t)$. Therefore $g_\la\in D(M_{\mu_t})$ and finally 
$\mathfrak{p}_\la\in D(T_t)$. 

If $\mathfrak{q}_\la\in D(T_t)$ then $(T_t-\la I)\mathfrak{q}_\la=e_0$ by \eqref{eq:p,q}, because $T_t$ is a restriction of $T^*$. This shows that $(x-\la)h_\la(x)=1$ in $L^2(\mu_t)$ and hence $C(\la,x)=0$ for all $x\in\supp(\mu_t)$. On the other hand, if $C(\la,x)=0$ for $x\in \supp(\mu_t)$, we conclude that $xh_\la(x)\in L^2(\mu_t)$, hence $\mathfrak{q}_\la\in D(T_t)$. This establishes \eqref{eq:qD(T_t)}.
\end{proof}

We know from Proposition~\ref{thm:supportNext} that $\supp(\mu_t)$ is the the zero set
of the entire function $B(z)+tD(z)$ understood as $D(z)$ if $t=\infty$. Using this we get the following Corollary about $\mathfrak{p}_\la$. We get a similar result about $\mathfrak{q}_\la$ from \eqref{eq:C1}.

\begin{cor}\label{thm:D(T_t)cor} For $t\in\R$ and $t=\infty$ we have
\begin{eqnarray*}
\mathfrak{p}_\la \in  D(T_t) &\iff &  B(\la)+tD(\la)=0,\\
\mathfrak{q}_\la \in  D(T_t) &\iff & A(\la)+tC(\la)=0.\\
\mathfrak{p}_\la \in  D(T_\infty) &\iff &  D(\la)=0,\\
\mathfrak{q}_\la \in  D(T_\infty) &\iff & C(\la)=0.
\end{eqnarray*}
In particular $\mathfrak{p}_\la$ and $\mathfrak{q}_\la$ only belong to $D(T_t)$ if 
$\la\in\R$, and for $\la\in\R$ they belong to a unique $D(T_t)$. Furthermore, they never belong to the same domain $D(T_t)$. 
\end{cor}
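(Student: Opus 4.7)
The plan is to combine Theorem~\ref{thm:D(T_t)} with the description of $\supp(\mu_t)$ from Proposition~\ref{thm:supportNext} and the one-to-two variable identities \eqref{eq:D1}, \eqref{eq:C1} already used earlier in the paper, which express $D(u,v)=B(u)D(v)-D(u)B(v)$ and $C(u,v)=A(u)D(v)-C(u)B(v)$. First I would handle the case $t\in\R$. By Proposition~\ref{thm:supportNext}(ii), every $x\in\supp(\mu_t)$ satisfies $B(x)+tD(x)=0$ with $D(x)\neq0$ (since $D(x)=0$ forces $t=\infty$). Substituting $B(x)=-tD(x)$ into the above identities gives
$$
D(\la,x)=\bigl(B(\la)+tD(\la)\bigr)D(x),\qquad C(\la,x)=\bigl(A(\la)+tC(\la)\bigr)D(x),
$$
so by Theorem~\ref{thm:D(T_t)} the conditions $\mathfrak{p}_\la\in D(T_t)$ and $\mathfrak{q}_\la\in D(T_t)$ become $B(\la)+tD(\la)=0$ and $A(\la)+tC(\la)=0$ respectively.

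For $t=\infty$ the same identities give $D(\la,x)=-D(\la)B(x)$ and $C(\la,x)=-C(\la)B(x)$ on $\supp(\mu_\infty)=\{D=0\}$, and on this set $B(x)\neq 0$ because $A(x)D(x)-B(x)C(x)=1$ by \eqref{eq:det=1*} rules out $B(x)=D(x)=0$. Hence the two equivalences collapse to $D(\la)=0$ and $C(\la)=0$, proving the first four biconditionals.

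For the "in particular" assertions, I would first note that $B$, $C$, $D$ have only real zeros by Theorem~\ref{thm:zeros} with second variable $v=0$, and that $B/D$ and $A/C$ are Pick functions (Proposition~\ref{thm:Pick}, already invoked in the paper). If $\la\in\C\setminus\R$ and $B(\la)+tD(\la)=0$ with $t\in\R$, then $D(\la)\neq 0$ and $-B(\la)/D(\la)=t$ would be real, contradicting $\Im(B(\la)/D(\la))\neq 0$; moreover $D(\la)=0$ (the $t=\infty$ case) is excluded because $\la\notin\R$. The analogous argument with $A/C$ handles $\mathfrak{q}_\la$. For $\la\in\R$, Proposition~\ref{thm:supportNext}(ii) already produces a unique $t$ with $\la\in\supp(\mu_t)$, which by the just-proved equivalences is the unique $t$ with $\mathfrak{p}_\la\in D(T_t)$; the uniqueness claim for $\mathfrak{q}_\la$ is settled identically.

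Finally, to show $\mathfrak{p}_\la$ and $\mathfrak{q}_\la$ cannot simultaneously lie in one $D(T_t)$, I would observe that for finite $t$ this would require
$$
\begin{pmatrix} B(\la) & D(\la) \\ A(\la) & C(\la)\end{pmatrix}\begin{pmatrix}1\\ t\end{pmatrix}=\begin{pmatrix}0\\0\end{pmatrix},
$$
whose determinant $B(\la)C(\la)-D(\la)A(\la)=-1$ by \eqref{eq:det=1*} is nonzero; the case $t=\infty$ would force $D(\la)=C(\la)=0$, again incompatible with \eqref{eq:det=1*}. The only real obstacle is bookkeeping—making sure the Pick-function sign conventions and the form of the identities \eqref{eq:D1}, \eqref{eq:C1} are applied in the right direction—but no new ingredients beyond the preceding theorem and the appendix are needed.
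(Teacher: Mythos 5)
Your proposal is correct and follows essentially the same route as the paper, which derives the corollary from Theorem~\ref{thm:D(T_t)} together with the identification of $\supp(\mu_t)$ as the zero set of $B+tD$ (resp.\ $D$) and the identity \eqref{eq:C1}; you merely spell out the substitution $B(x)=-tD(x)$ in \eqref{eq:D1}, \eqref{eq:C1} and the nonvanishing of $D(x)$ (resp.\ $B(x)$) via \eqref{eq:det=1*}, which the paper leaves implicit. Your arguments for the ``in particular'' assertions (Pick functions for the reality of $\la$, Proposition~\ref{thm:supportNext} and the analogous computation for uniqueness, and the determinant $B(\la)C(\la)-A(\la)D(\la)=-1$ for the incompatibility) are exactly the intended ones.
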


\begin{rem}\label{thm:notin}{\rm Since $D(T)\subset D(T_t)$ for all $t\in\R^*$, it is clear that Corollary~\ref{thm:D(T_t)cor} implies that $\mathfrak{p}_\la, \mathfrak{q}_\la\notin D(T)$ as stated in Corollary~\ref{thm:pqnotinDA}.
}
\end{rem}

We also have a kind of converse to Theorem~\ref{thm:pro1}.

\begin{thm}\label{thm:pro2} Assume that $\la,\tau\in\C$ are such that $\tau\mathfrak{p}_\la +\mathfrak{q}_\la\in D(T_t)$ for some $t\in\R^*$. Then $\la\notin \supp(\mu_t)$ and 
$\tau=w_{\mu_t}(\la)$ given by \eqref{eq:w}.
\end{thm}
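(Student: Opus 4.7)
The strategy is to transfer the problem to $L^2(\mu_t)$ via the unitary equivalence already used throughout Section 3, and then exploit uniqueness of expansions to pin down $\tau$.

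Suppose $c := \tau\mathfrak{p}_\la + \mathfrak{q}_\la \in D(T_t)$. Since $T_t$ is a restriction of $T^*$, I apply \eqref{eq:p,q} to compute
\[
T_t c = \tau\la\mathfrak{p}_\la + \la\mathfrak{q}_\la + e_0 = \la c + e_0.
\]
Let $U:\ell^2\to L^2(\mu_t)$ be the unitary sending $e_n\mapsto p_n$, and put $F := Uc \in L^2(\mu_t)$. Because $T_t$ is unitarily equivalent to $M_{\mu_t}$, we have $F\in D(M_{\mu_t})$ and $xF(x) = \la F(x) + 1$ in $L^2(\mu_t)$, hence $(x-\la)F(x) = 1$ pointwise on the discrete set $\supp(\mu_t)$.

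This immediately forces $\la\notin\supp(\mu_t)$: otherwise, evaluating at $x=\la$ would give $0=1$. Consequently $F(x) = 1/(x-\la)$ on $\supp(\mu_t)$.

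To identify $\tau$, I note that by Theorem~\ref{thm:pro1} the vector $w_{\mu_t}(\la)\mathfrak{p}_\la + \mathfrak{q}_\la$ also lies in $D(T_t)$ and is mapped by $U$ to the same function $x\mapsto 1/(x-\la)$ on $\supp(\mu_t)$, cf.\ the expansion \eqref{eq:x}. Since $U$ is injective, the two vectors coincide in $\ell^2$, giving
\[
(\tau - w_{\mu_t}(\la))\,\mathfrak{p}_\la = 0,
\]
and since $p_0(\la) = 1$ we conclude $\tau = w_{\mu_t}(\la)$. The only mildly subtle step is justifying that $(x-\la)F(x)=1$ holds pointwise on $\supp(\mu_t)$, but this is immediate from discreteness of $\mu_t$ (Proposition~\ref{thm:supportNext}), so the argument is essentially bookkeeping once the unitary equivalence is invoked.
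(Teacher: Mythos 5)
Your proof is correct, but it takes a different route from the paper's. The paper handles the first claim by contradiction at the level of domains: if $\la\in\supp(\mu_t)$ then $\mathfrak{p}_\la\in D(T_t)$ by Theorem~\ref{thm:D(T_t)}, whence $\mathfrak{q}_\la\in D(T_t)$ as well, contradicting Corollary~\ref{thm:D(T_t)cor} (which says $\mathfrak{p}_\la$ and $\mathfrak{q}_\la$ never lie in the same $D(T_t)$, and whose proof rests on the Nevanlinna identities \eqref{eq:B1}, \eqref{eq:C1}, \eqref{eq:D1} and \eqref{eq:det=1*}). You instead transfer directly to $L^2(\mu_t)$ and read off the contradiction $0=1$ from the pointwise identity $(x-\la)F(x)=1$ at $x=\la$; this is more self-contained, needing only \eqref{eq:p,q}, the unitary equivalence with $M_{\mu_t}$, and discreteness of $\mu_t$. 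For the second claim the paper subtracts the vector from Theorem~\ref{thm:pro1} to get $(\tau-w_{\mu_t}(\la))\mathfrak{p}_\la\in D(T_t)$ and then invokes $\mathfrak{p}_\la\notin D(T_t)$; you prove the stronger statement that $\tau\mathfrak{p}_\la+\mathfrak{q}_\la$ and $w_{\mu_t}(\la)\mathfrak{p}_\la+\mathfrak{q}_\la$ are literally the same vector, via injectivity of $U$ and the expansion \eqref{eq:x}, so you only need $\mathfrak{p}_\la\neq 0$ (from $p_0(\la)=1$) rather than $\mathfrak{p}_\la\notin D(T_t)$. Both arguments are sound; yours trades the domain-theoretic machinery of Theorem~\ref{thm:D(T_t)} and Corollary~\ref{thm:D(T_t)cor} for a direct computation in the model space, while the paper's version makes the structural obstruction (membership of $\mathfrak{p}_\la,\mathfrak{q}_\la$ in the various $D(T_t)$) do the work.
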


\begin{proof} Assume that $\la\in\supp(\mu_t)$. By Theorem~\ref{thm:D(T_t)} we know that $\mathfrak{p}_\la\in D(T_t)$ and hence $\mathfrak{q}_\la\in D(T_t)$, contradicting Corollary~\ref{thm:D(T_t)cor}.

Having established $\la\notin\supp(\mu_t)$, we get by Theorem~\ref{thm:pro1} that $(\tau-w_{\mu_t}(\la))\mathfrak{p}_\la\in D(T_t)$, but since
 $\mathfrak{p}_\la\notin D(T_t)$, we get 
$\tau-w_{\mu_t}(\la)=0$.
\end{proof}

 \begin{thm}\label{thm:DT} Let $t\in\R^*$ and $\la\in \C\setminus \supp(\mu_t)$ be given.
 Then there exists a unique pair $(s,c)\in D(T)\times \C$ depending on $t,\la$ such that 
\begin{equation*}
 w_{\mu_t}(\lambda)\mathfrak{p}_\lambda+\mathfrak{q}_\lambda
 =\left\{\begin{array}{ll}
 s +c(\mathfrak{q}_0+t\mathfrak{p}_0), &   t\in\R\\
  s +c\mathfrak{p}_0, & t=\infty.
  \end{array}
  \right.
 \end{equation*}
 We have
 \begin{equation*}
 c=\left\{\begin{array}{ll}
 -1/(B(\la)+tD(\la)), & t\in\R \\
  -1/D(\la), & t=\infty,
    \end{array}
  \right.
  \end{equation*}
   and $s$ is given by inserting the value of $c$.
 \end{thm}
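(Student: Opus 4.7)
The existence and uniqueness of $(s,c)$ are immediate from Theorem~\ref{thm:pro1}, which places $w_{\mu_t}(\la)\mathfrak{p}_\la+\mathfrak{q}_\la$ in $D(T_t)$, together with the direct-sum decomposition $D(T_t)=D(T)\oplus\C\phi_t$ of \eqref{eq:domTt}, where I write $\phi_t:=\mathfrak{q}_0+t\mathfrak{p}_0$ for $t\in\R$ and $\phi_\infty:=\mathfrak{p}_0$. The substantive task is therefore to identify $c$.

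My plan is to apply $T_t-\la I$ to the candidate decomposition $w_{\mu_t}(\la)\mathfrak{p}_\la+\mathfrak{q}_\la=s+c\phi_t$. By \eqref{eq:p,q} one has $(T^*-\la I)(w_{\mu_t}(\la)\mathfrak{p}_\la+\mathfrak{q}_\la)=e_0$, while $T^*\mathfrak{p}_0=0$ and $T^*\mathfrak{q}_0=e_0$ (both \eqref{eq:p,q} at $z=0$) yield $T_t\phi_t=e_0$ for $t\in\R$ and $T_\infty\phi_\infty=0$. The decomposition is thereby converted into
$$
(T-\la I)s \;=\; (1-c)e_0+c\la\phi_t \qquad (t\in\R),
$$
and $(T-\la I)s=e_0+c\la\mathfrak{p}_0$ in the case $t=\infty$.

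For such $s\in D(T)$ to exist, the right-hand side must lie in $(T-\la I)(D(T))$. I observe that this range is closed of codimension $1$ in $\ell^2$, with orthogonal complement $\C\mathfrak{p}_{\bar\la}$: since $\la\notin\supp(\mu_t)=\sigma(T_t)$, the operator $T_t-\la I$ is a homeomorphism from $D(T_t)$ (graph norm) onto $\ell^2$ and so sends the codimension-$1$ subspace $D(T)$ to a closed codimension-$1$ subspace of $\ell^2$; meanwhile, the symmetry of $T$ combined with $T^*\mathfrak{p}_{\bar\la}=\bar\la\mathfrak{p}_{\bar\la}$ (again \eqref{eq:p,q}) forces $\langle(T-\la I)s,\mathfrak{p}_{\bar\la}\rangle=0$ for every $s\in D(T)$, which identifies the complement. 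Consequently $c$ is uniquely pinned down by the scalar requirement that the right-hand side be orthogonal to $\mathfrak{p}_{\bar\la}$.

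To finish I compute the needed inner products. Using $\overline{p_n(\bar\la)}=p_n(\la)$, one has $\langle e_0,\mathfrak{p}_{\bar\la}\rangle=p_0(\la)=1$, while the series \eqref{eq:C}, \eqref{eq:D} at $(u,v)=(0,\la)$ give $\langle\mathfrak{p}_0,\mathfrak{p}_{\bar\la}\rangle=-D(0,\la)/\la$ and $\langle\mathfrak{q}_0,\mathfrak{p}_{\bar\la}\rangle=(1-C(0,\la))/\la$. The antisymmetry $D(v,u)=-D(u,v)$ and the identity $C(u,v)=-B(v,u)$ (both immediate from \eqref{eq:A}--\eqref{eq:D}) applied with $u=0$ reduce these to $D(\la)/\la$ and $(1+B(\la))/\la$ respectively. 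Substituting, the orthogonality condition collapses to $1+c(B(\la)+tD(\la))=0$ when $t\in\R$ and to $1+cD(\la)=0$ when $t=\infty$, giving the stated values of $c$; the corresponding $s$ is then obtained by subtraction. The only real subtlety is that the denominator is nonzero, which is exactly the hypothesis $\la\notin\supp(\mu_t)$ via Proposition~\ref{thm:supportNext}(i); at $\la=0$ the expressions $D(\la)/\la$ and $(1+B(\la))/\la$ are to be read as limits of the relevant entire functions.
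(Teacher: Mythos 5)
Your argument is correct, and it takes a genuinely different route from the paper's. The paper identifies $c$ by expanding $w_{\mu_t}(\la)\mathfrak{p}_\la+\mathfrak{q}_\la-c(\mathfrak{q}_0+t\mathfrak{p}_0)$ in the deficiency-space decomposition of Proposition~\ref{thm:pq} with a fixed $z_0$ in the upper half-plane; this yields the two conditions \eqref{eq:overlinez_0}--\eqref{eq:z_0} in the two-variable Nevanlinna functions, and extracting $c$ then requires the value of $w_{\mu_t}(\la)$ from \eqref{eq:w}, the three-point identity \eqref{eq:D3}, formula \eqref{eq:D1} used twice, and the relation $t=-B(x)/D(x)$ on $\supp(\mu_t)$. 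You instead apply $T_t-\la I$ to both sides of the decomposition, which annihilates the $\mathfrak{p}_\la$-term by \eqref{eq:p,q} (so the value of $w_{\mu_t}(\la)$ never enters), and then determine $c$ from the single necessary condition that $(T-\la I)s$ be orthogonal to $\mathfrak{p}_{\overline\la}$, i.e.\ from \eqref{eq:regor}; the resulting inner products reduce directly to $B(\la)$ and $D(\la)$ via the defining series \eqref{eq:C}--\eqref{eq:D}. What your route buys is brevity and independence from the Appendix identities; what it relies on instead is the a priori existence of the pair $(s,c)$ (Theorem~\ref{thm:pro1} together with \eqref{eq:domTt}), since a necessary condition alone would otherwise not identify $c$ --- but you invoke exactly that, so the logic closes. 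Two minor observations: the digression on $T_t-\la I$ being a homeomorphism and the range having codimension one is superfluous, as only the necessity $\langle(T-\la I)s,\mathfrak{p}_{\overline\la}\rangle=\langle s,(T^*-\overline\la I)\mathfrak{p}_{\overline\la}\rangle=0$ is used; and the case $\la=0$ needs no separate limit argument, because the factor $\la$ cancels before any division, leaving $1+c(B(\la)+tD(\la))=0$ valid for all admissible $\la$, with $B(0)+tD(0)=-1$.
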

  
  \begin{proof} Recall that $\la\in\C\setminus \supp(\mu_t)$ if and only if $B(\la)+tD(\la)\neq 0$ when $t\in\R$, and that $\la\in\C\setminus \supp(\mu_\infty)$ if and only if
 $D(\la)\neq 0$. The existence and uniqueness of $(s,c)$ follow from Theorem~\ref{thm:pro1} and formula \eqref{eq:domTt}.
 
 In case $t\in\R$ we have
 $$
 w_{\mu_t}(\lambda)\mathfrak{p}_\lambda+\mathfrak{q}_\lambda-c(\mathfrak{q}_0+t\mathfrak{p}_0)\in D(T), 
 $$
 and fixing $z_0$ in the open upper half-plane we have by Proposition 2.1
  $$
 w_{\mu_t}(\la)s_\la^{+} +r_\la^{+} - c(r_0^{+}+ts_0^{+})= w_{\mu_t}(\la)s_\la^{-} +r_\la^{-} - c(r_0^{-}+ ts_0^{-})=0,
 $$
 or equivalently by \eqref{eq:s+-} and \eqref{eq:r+-}
\begin{equation}\label{eq:overlinez_0}
 w_{\mu_t}(\la) D(\la,\overline{z_0})+C(\la,\overline{z_0})-c(C(0,\overline{z_0})+tD(0,\overline{z_0}))=0,
\end{equation}
and
\begin{equation}\label{eq:z_0}
 w_{\mu_t}(\la) D(\la,z_0)+C(\la,z_0)-c(C(0,z_0)+tD(0,z_0))=0.
\end{equation}

From \eqref{eq:z_0} and \eqref{eq:w} we get for $x\in\supp(\mu_t)$
\begin{eqnarray*}
c &=& -\frac{w_{\mu_t}(\la) D(\la,z_0)+C(\la,z_0)}{B(z_0)+tD(z_0)}\\
 &=&  \frac{C(\la,x)D(\la,z_0)-D(\la,x)C(\la,z_0)}{(B(z_0)+tD(z_0))D(\la,x)}\\
 &=&-\frac{D(z_0,\la)C(\la,x)-B(z_0,\la)D(\la,x)}{(B(z_0)+tD(z_0))D(\la,x)}\\
 &=&-\frac{D(z_0,x)}{(B(z_0)+tD(z_0))D(\la,x)}=-\frac{1}{B(\la)+tD(\la)}.
  \end{eqnarray*}
 Here we have first used \eqref{eq:D3} and next used \eqref{eq:D1}  twice. Finally we recall that $t=-B(x)/D(x)$ for $x\in\supp(\mu_t)$, cf. Proposition~\ref{thm:supportNext}. 
 
 Note that \eqref{eq:overlinez_0} leads to the same expression for $c$.
 
 The case $t=\infty$ is treated in the same way. 
 \end{proof}

\section{Parametrizations of the domain of the Jacobi operator}
The Jacobi operator $(T,D(T))$ in the indeterminate case is regular in the sense of \cite[p. 20]{G:G}, i.e., for any $z\in\C$ there exists $d(z)>0$ such that
\begin{equation}\label{eq:reg}
||(T-zI)c||\ge d(z)||c||,\quad c\in D(T).
\end{equation}
For $z\in\C\setminus\R$ this is true with $d(z)=|\Im(z)|$, and for $z\in\R$ let $t_0\in\R^*$ be such that $z\in\supp(\mu_{t_0})$. For $t\in\R^*\setminus\{t_0\}$ the distance 
$$
d_t(z):=\min\{|z-x|\mid x\in \supp(\mu_t)\}>0,
$$ 
can be used in \eqref{eq:reg}, since we have
$$
||(T-zI)c||^2= \int \left|(x-z)F_c(x)\right|^2\,d\mu_t(x)\ge d_t(z)^2||c||^2,
$$
where $F_c$ is given in \eqref{eq:FGc}.

We have the orthogonal decomposition in closed subspaces
\begin{equation}\label{eq:regor}
\ell^2=(T-zI)(D(T))\oplus \C\mathfrak{p}_{\overline{z}},\quad z\in\C.
\end{equation}

The operator $(T,D(T))$ has no eigenvalues, has empty continuous spectrum, and the spectrum $\sigma(T)=\C$ is equal to the residual  spectrum, cf. \cite[p.209]{Y}.
 
 For $z_0\in\C$ we have the orthogonal expansion
\begin{equation}\label{eq:1}
\frac{p_n(z)-p_n(z_0)}{z-z_0}=\sum_{k=0}^{n-1} a_{n,k}(z_0)p_k(z), \quad z\in\C
\end{equation}
of the polynomial $(p_n(z)-p_n(z_0)/(z-z_0)$ of degree $n-1$, and it is easy to see that
\begin{equation}\label{eq:2}
a_{n,k}(z_0)=\int \frac{p_n(x)-p_n(z_0)}{x-z_0}p_k(x)\,d\mu(x)=  q_n(z_0)p_k(z_0)-p_n(z_0)q_k(z_0),
\end{equation}
where $\mu\in V$ is an arbitrary solution to \eqref{eq:mom}, cf. \cite[p. 18]{Ak}.

\begin{lem}\label{thm:ank} The coefficients $a_{n,k}(z_0)$ from \eqref{eq:2}
satisfy
\begin{equation}\label{eq:3}
|a_{n,k}(z_0)|^2\le \left(|p_n(z_0)|^2+|q_n(z_0)|^2\right)\left(|p_k(z_0)|^2+|q_k(z_0)|^2\right).
\end{equation}
Therefore
\begin{equation}\label{eq:3bis}
\sum_{n=k+1}^\infty |a_{n,k}(z_0)|^2\le (||\mathfrak{p}_{z_0}||^2+||\mathfrak{q}_{z_0}||^2)(|p_k(z_0)|^2+|q_k(z_0)|^2).
\end{equation}
Furthermore,
\begin{equation}\label{eq:dif}
\sum_{n=0}^\infty\left|\frac{p_n(z)-p_n(z_0)}{z-z_0}\right|^2\le ||\mathfrak{p}_z||^2\left(||\mathfrak{p}_{z_0}||^2+||\mathfrak{q}_{z_0}||^2\right)^2.
\end{equation}
In particular, for $z\to z_0$
\begin{equation*}\label{eq:diff}
\sum_{n=0}^\infty|p_n'(z_0)|^2\le ||\mathfrak{p}_{z_0}||^2\left(||\mathfrak{p}_{z_0}||^2+||\mathfrak{q}_{z_0}||^2\right)^2.
\end{equation*}
 \end{lem}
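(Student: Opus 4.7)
The plan is to build the four inequalities in the stated order, each a Cauchy--Schwarz estimate fed by the previous one. For the pointwise bound \eqref{eq:3}, I would start from the closed-form expression $a_{n,k}(z_0)=q_n(z_0)p_k(z_0)-p_n(z_0)q_k(z_0)$ supplied by \eqref{eq:2} and apply the elementary identity $|ac-bd|^2\le(|a|^2+|b|^2)(|c|^2+|d|^2)$ (Cauchy--Schwarz in $\C^2$) with $a=q_n(z_0),\ b=p_n(z_0),\ c=p_k(z_0),\ d=q_k(z_0)$. The estimate \eqref{eq:3bis} then follows by summing \eqref{eq:3} over $n\ge k+1$, pulling the $k$-factor outside, and dominating the remaining $n$-series by $||\mathfrak{p}_{z_0}||^2+||\mathfrak{q}_{z_0}||^2$.

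For the difference-quotient inequality \eqref{eq:dif}, I would view the polynomial identity \eqref{eq:1} as the expansion of $(p_n(z)-p_n(z_0))/(z-z_0)$ in the orthonormal system $(p_k)$. Applying Cauchy--Schwarz in the $k$-variable and using $\sum_{k=0}^{n-1}|p_k(z)|^2\le||\mathfrak{p}_z||^2$ yields
\[\left|\frac{p_n(z)-p_n(z_0)}{z-z_0}\right|^2\le||\mathfrak{p}_z||^2\sum_{k=0}^{n-1}|a_{n,k}(z_0)|^2.\]
Summing over $n\ge 1$ (the $n=0$ contribution vanishes since $p_0\equiv 1$) and interchanging the order of summation by Tonelli for non-negative terms gives the double sum $\sum_{k=0}^\infty\sum_{n=k+1}^\infty|a_{n,k}(z_0)|^2$, each inner sum of which is controlled by \eqref{eq:3bis}; the final $k$-summation turns the factor $|p_k(z_0)|^2+|q_k(z_0)|^2$ into the squared expression $(||\mathfrak{p}_{z_0}||^2+||\mathfrak{q}_{z_0}||^2)^2$.

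The bound on $\sum|p_n'(z_0)|^2$ I would obtain by letting $z\to z_0$ in \eqref{eq:dif}: each summand converges pointwise to $|p_n'(z_0)|^2$, so Fatou's lemma controls the left-hand side from below, while the right-hand side is continuous at $z=z_0$ by the stated continuity of $z\mapsto||\mathfrak{p}_z||$. None of the steps presents a serious obstacle; the only point requiring a word is the interchange of summation in the third step, which is legitimate by non-negativity, and the harmless but convenient observation that $p_0\equiv 1$ lets one omit the $n=0$ term in the double sum.
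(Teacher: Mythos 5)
Your proposal is correct and follows essentially the same route as the paper: \eqref{eq:3} via the Cauchy--Schwarz inequality applied to the $2\times 2$ determinant form of $a_{n,k}(z_0)$, \eqref{eq:3bis} by summing in $n$, \eqref{eq:dif} by Cauchy--Schwarz on the expansion \eqref{eq:1} followed by the (legitimately non-negative) interchange of the $n$- and $k$-sums, and the derivative bound by passing to the limit $z\to z_0$. The only cosmetic difference is that you invoke \eqref{eq:3bis} for the inner sum while the paper substitutes \eqref{eq:3} directly and rearranges afterward; your explicit appeals to Tonelli and Fatou merely make precise what the paper leaves implicit.
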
 
 
 \begin{proof} Formula \eqref{eq:3} is a consequence of the Cauchy-Schwarz inequality.

From \eqref{eq:1} and \eqref{eq:3} we get
\begin{eqnarray*}
\left|\frac{p_n(z)-p_n(z_0)}{z-z_0}\right|^2 &\le& \sum_{k=0}^{n-1} |a_{n,k}(z_0)|^2\sum_{k=0}^{n-1} |p_k(z)|^2\\
&\le& ||\mathfrak{p}_z||^2\sum_{k=0}^{n-1} \left(|p_n(z_0)|^2+|q_n(z_0)|^2\right) \left(|p_k(z_0)|^2+|q_k(z_0)|^2\right),
\end{eqnarray*} 
and finally
\begin{eqnarray*}
\lefteqn{\sum_{n=0}^\infty \left|\frac{p_n(z)-p_n(z_0)}{z-z_0}\right|^2 }\\
&\le & ||\mathfrak{p}_z||^2
\sum_{k=0}^\infty \left[\left(|p_k(z_0)|^2+|q_k(z_0)|^2\right) \sum_{n=k+1}^\infty
 \left(|p_n(z_0)|^2+|q_n(z_0)|^2\right)\right]\\
&\le& ||\mathfrak{p}_z||^2 \left(||\mathfrak{p}_{z_0}||^2+\mathfrak{p}_{z_0}||^2\right)^2,
\end{eqnarray*}
which yields \eqref{eq:dif}.
\end{proof} 

We shall now show that the Hilbert space $\mathcal E=\{F_c(z)\}$ defined in \eqref{eq:E} is stable under difference quotients:
  
\begin{thm}\label{thm:deB} For $c\in\ell^2$  and $z_0\in\C$ there exists $\xi(c,z_0)\in\ell^2$ such that
\begin{equation}\label{eq:Fcq}
\frac{F_c(z)-F_c(z_0)}{z-z_0}=F_{\xi(c,z_0)}(z) \in\mathcal E, 
\end{equation}
and the coordinates of $\xi(c,z_0)$ are defined by 
\begin{equation}\label{eq:xi}
\xi_k(c,z_0)=\sum_{n=k+1}^\infty c_n a_{n,k}(z_0),\quad k\ge 0.
\end{equation}
Furthermore,
\begin{equation}\label{eq:normxi}
||\xi(c,z_0)|| \le ||c||\left(||\mathfrak{p}_{z_0}||^2+||\mathfrak{q}_{z_0}||^2\right).
\end{equation}
\end{thm}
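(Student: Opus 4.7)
The plan is to construct $\xi(c,z_0)$ coordinate-by-coordinate via formula \eqref{eq:xi}, establish the $\ell^2$ bound \eqref{eq:normxi} by a double application of Lemma~\ref{thm:ank}, and then verify the functional identity \eqref{eq:Fcq} first on finite sequences and extend by density.

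For well-definedness, the Cauchy--Schwarz inequality combined with \eqref{eq:3bis} gives
$$|\xi_k(c,z_0)|^2 \le ||c||^2 \sum_{n=k+1}^\infty |a_{n,k}(z_0)|^2 \le ||c||^2 \bigl(||\mathfrak{p}_{z_0}||^2 + ||\mathfrak{q}_{z_0}||^2\bigr)\bigl(|p_k(z_0)|^2 + |q_k(z_0)|^2\bigr),$$
so the series in \eqref{eq:xi} converges absolutely. Summing over $k$ and using $\sum_k (|p_k(z_0)|^2 + |q_k(z_0)|^2) = ||\mathfrak{p}_{z_0}||^2 + ||\mathfrak{q}_{z_0}||^2$ yields \eqref{eq:normxi}, and in particular shows that the linear map $\Xi_{z_0}: c \mapsto \xi(c,z_0)$ is bounded on $\ell^2$ with norm at most $||\mathfrak{p}_{z_0}||^2 + ||\mathfrak{q}_{z_0}||^2$.

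Next I would verify \eqref{eq:Fcq} on the dense subspace $\mathcal F$. For $c \in \mathcal F$ all sums are finite, so substituting \eqref{eq:1} and interchanging the order of summation gives
$$\frac{F_c(z) - F_c(z_0)}{z-z_0} = \sum_n c_n \sum_{k=0}^{n-1} a_{n,k}(z_0) p_k(z) = \sum_k \xi_k(c,z_0) p_k(z) = F_{\xi(c,z_0)}(z),$$
with no convergence issues.

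To extend to all of $\ell^2$, I would use the reproducing-kernel bound $|F_d(z)| \le ||d||\cdot||\mathfrak{p}_z||$ (itself Cauchy--Schwarz applied to the defining series of $F_d$), which makes evaluation at $z$ a bounded linear functional on $\mathcal E$. If $c^{(N)}$ denotes the truncation of $c$ to its first $N+1$ entries, then $c^{(N)} \to c$ in $\ell^2$ yields $F_{c^{(N)}}(z) \to F_c(z)$ pointwise, while boundedness of $\Xi_{z_0}$ gives $\xi(c^{(N)},z_0) \to \xi(c,z_0)$ in $\ell^2$ and hence $F_{\xi(c^{(N)},z_0)}(z) \to F_{\xi(c,z_0)}(z)$ pointwise in $z$. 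Passing to the limit in the identity already established for $c^{(N)}$ delivers \eqref{eq:Fcq} for every $c \in \ell^2$. I do not anticipate a serious obstacle; the density argument bypasses any direct appeal to Fubini, and the only delicate point is the sharp bookkeeping in the double-sum estimate producing \eqref{eq:normxi}.
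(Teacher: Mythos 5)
Your proposal is correct, and while the norm estimate \eqref{eq:normxi} is obtained exactly as in the paper (Cauchy--Schwarz in $n$ followed by \eqref{eq:3bis} and summation over $k$), your verification of the identity \eqref{eq:Fcq} takes a genuinely different route. The paper works directly with an arbitrary $c\in\ell^2$: it substitutes \eqref{eq:1} into $\sum_n c_n\bigl(p_n(z)-p_n(z_0)\bigr)/(z-z_0)$ and interchanges the two infinite summations, justifying the rearrangement by an explicit absolute-convergence estimate (a Fubini-type argument whose majorant is again built from \eqref{eq:3bis} and ends with the bound $||c||\,||\mathfrak{p}_z||\bigl(||\mathfrak{p}_{z_0}||^2+||\mathfrak{q}_{z_0}||^2\bigr)$). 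You instead verify the identity on $\mathcal F$, where the interchange is a finite manipulation, and extend by density: the left-hand side is continuous in $c$ because point evaluation on $\mathcal E$ is bounded ($|F_d(z)|\le ||d||\,||\mathfrak{p}_z||$), and the right-hand side is continuous because $\Xi_{z_0}$ is bounded by \eqref{eq:normxi} and point evaluation is again bounded. Both arguments rest on the same Lemma~\ref{thm:ank}; yours trades the explicit double-sum bookkeeping for a standard continuity argument and, as a by-product, makes the boundedness of $\Xi_{z_0}$ (needed later in Theorem~\ref{thm:para}) do useful work already here. The only point to state explicitly is the passage at $z=z_0$: your pointwise limit is taken for $z\neq z_0$, and one should add (as the paper does) that both sides are entire, so agreement off $z_0$ forces agreement at $z_0$ with the value $F_c'(z_0)$.
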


\begin{proof} The series in \eqref{eq:xi} is absolutely convergent being the product of two $\ell^2$ sequences. Furthermore, by the Cauchy-Schwarz inequality and \eqref{eq:3} we get
$$
|\xi_k(c,z_0)|^2\le ||c||^2 (||\mathfrak{p}_{z_0}||^2+||\mathfrak{q}_{z_0}||^2)\left(|p_k(z_0)|^2+|q_k(z_0)|^2\right),
$$
and therefore $(\xi_k(c,z_0))\in\ell^2$ and \eqref{eq:normxi} holds.

We next find
$$
\frac{F_c(z)-F_c(z_0)}{z-z_0}=\sum_{n=0}^\infty c_n\frac{p_n(z)-p_n(z_0)}{z-z_0},\quad z\neq z_0.
$$ 

 Inserting the expression
\eqref{eq:1} on the right-hand side, we get for $z\neq z_0$
\begin{eqnarray*}
\frac{F_c(z)-F_c(z_0)}{z-z_0}&=&\sum_{n=1}^\infty c_n\sum_{k=0}^{n-1} a_{n,k}(z_0)p_k(z)\\
&=&\sum_{k=0}^\infty p_k(z) \sum_{n=k+1}^\infty c_n a_{n,k}(z_0)\\
&=&\sum_{k=0}^\infty\xi_k(c,z_0) p_k(z),
\end{eqnarray*}
where the rearrangement is possible due to absolute convergence:
\begin{eqnarray*}
\lefteqn{\left|\frac{F_c(z)-F_c(z_0)}{z-z_0}\right| }\\
&\le& \sum_{n=1}^\infty |c_n|\sum_{k=0}^{n-1} |a_{n,k}(z_0)|\,|p_k(z)|\\
&=&\sum_{k=0}^\infty |p_k(z)| \sum_{n=k+1}^\infty |c_n|\,|a_{n,k}(z_0)| \\
&\le& ||c||\sum_{k=0}^\infty |p_k(z)|\left[\sum_{n=k+1}^\infty |a_{n,k}(z_0)|^2\right]^{1/2}\\
&\le& ||c||\sum_{k=0}^\infty |p_k(z)|\left[\left(||\mathfrak{p}_{z_0}||^2+||\mathfrak{q}_{z_0}||^2\right)\left(|p_k(z_0)|^2+|q_k(z_0)|^2\right)\right]^{1/2}\\
&\le& ||c||\left(||\mathfrak{p}_{z_0}||^2+||\mathfrak{q}_{z_0}||^2\right)^{1/2}||\mathfrak{p}_z||\left(\sum_{k=0}^\infty\left(|p_k(z_0)|^2+|q_k(z_0)|^2\right)\right)^{1/2}\\
&=&||c||\,||\mathfrak{p}_z||\left(||\mathfrak{p}_{z_0}||^2+||\mathfrak{q}_{z_0}||^2\right),
\end{eqnarray*}
where we have used \eqref{eq:3bis}. 

It is now clear that the entire functions
$z\mapsto (F_c(z)-F_c(z_0))/(z-z_0)$, with value $F'_c(z_0)$ for $z=z_0$, and $F_{\xi(c,z_0)}(z)$ agree.
\end{proof}

\begin{thm}\label{thm:para} Let $\Xi_{z_0}$ denote the bounded operator in $\ell^2$ defined by
\begin{equation}\label{eq:para}
\Xi_{z_0}(c):=\xi(c,z_0),\quad z_0\in\C,\;c\in\ell^2,
\end{equation}
where $\xi(c,z_0)$ is defined in Theorem~\ref{thm:deB}.
We have $\Xi_{z_0}(\ell^2)=D(T)$  and $\ker(\Xi_{z_0})=\C e_0$ for each $z_0\in\C$.

Furthermore, for $z_0\in\C$
\begin{equation}\label{eq:split}
(T-z_0 I)\Xi_{z_0}(c)+F_c(z_0)e_0=c, \quad c\in\ell^2.
\end{equation}
The restriction of $\Xi_{z_0}$ to $(T-z_0I)(D(T))$ is a bijection onto $D(T)$ equal to $(T-z_0I)^{-1}$.
\end{thm}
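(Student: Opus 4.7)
The plan is to translate the key identity of Theorem~\ref{thm:deB}, which via the isometric isomorphism $c\mapsto F_c$ from $\ell^2$ onto $\mathcal{E}$ states
$$(z-z_0)F_{\xi(c,z_0)}(z) = F_c(z) - F_c(z_0),$$
into the $\ell^2$-identity \eqref{eq:split}. Boundedness of $\Xi_{z_0}$ is immediate from \eqref{eq:normxi}, with operator norm at most $\|\mathfrak{p}_{z_0}\|^2+\|\mathfrak{q}_{z_0}\|^2$. The kernel is just as quick: since $p_0\equiv 1$, the constant functions in $\mathcal{E}$ correspond under the isomorphism to $\C e_0\subset\ell^2$, so $\xi(c,z_0)=0$ iff $F_c\equiv F_c(z_0)$ iff $c\in\C e_0$, giving $\ker\Xi_{z_0}=\C e_0$.

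Next I would establish \eqref{eq:split} first on the dense subspace $\mathcal{F}$, where both sides are automatically in $\ell^2$. For $c\in\mathcal{F}$ the function $F_c$ is a polynomial, and the explicit formula \eqref{eq:xi} shows $\xi(c,z_0)\in\mathcal{F}\subset D(T)$. The three-term recurrence yields $F_{Td}(z)=zF_d(z)$ for $d\in\mathcal{F}$, so multiplying the key identity by $(z-z_0)$ and transporting back through the isomorphism gives $(T-z_0I)\xi(c,z_0)+F_c(z_0)e_0=c$ for $c\in\mathcal{F}$. To pass to general $c\in\ell^2$, I approximate by $c^{(k)}\in\mathcal{F}$ with $c^{(k)}\to c$. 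Continuity of $\Xi_{z_0}$ gives $\Xi_{z_0}(c^{(k)})\to\Xi_{z_0}(c)$, and continuity of the linear functional $e\mapsto F_e(z_0)=\langle e,\mathfrak{p}_{\overline{z_0}}\rangle$ delivers convergence of the right-hand side of \eqref{eq:split}; hence $T\Xi_{z_0}(c^{(k)})$ converges, and closedness of $T$ forces $\Xi_{z_0}(c)\in D(T)$ together with \eqref{eq:split}. This simultaneously proves $\Xi_{z_0}(\ell^2)\subseteq D(T)$.

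For the reverse inclusion and the bijection claim, given $d\in D(T)$ I set $c:=(T-z_0I)d$. By the orthogonality $(T-z_0I)(D(T))\perp\mathfrak{p}_{\overline{z_0}}$ in \eqref{eq:regor}, one has $F_c(z_0)=\langle c,\mathfrak{p}_{\overline{z_0}}\rangle=0$, so \eqref{eq:split} reduces to $(T-z_0I)\Xi_{z_0}(c)=(T-z_0I)d$, and the regularity bound \eqref{eq:reg} forces $\Xi_{z_0}((T-z_0I)d)=d$. Hence $d\in\Xi_{z_0}(\ell^2)$ and $\Xi_{z_0}\circ(T-z_0I)=\mathrm{id}_{D(T)}$. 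Symmetrically, for any $c\in(T-z_0I)(D(T))$ the same vanishing $F_c(z_0)=0$ turns \eqref{eq:split} into $(T-z_0I)\Xi_{z_0}(c)=c$, so $\Xi_{z_0}|_{(T-z_0I)(D(T))}$ is a two-sided inverse of $T-z_0I\colon D(T)\to(T-z_0I)(D(T))$, as claimed.

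The main obstacle is showing that $\Xi_{z_0}(c)$ lands in $D(T)$ rather than merely in $D(T^*)$: this is exactly the point at which the polynomial-case identity must be upgraded to all of $\ell^2$, and the argument works only because the scalar correction $c\mapsto F_c(z_0)$ is $\ell^2$-continuous, allowing the approximation to be closed off using closedness of $T$.
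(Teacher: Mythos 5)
Your proposal is correct, and it reaches the key identity \eqref{eq:split} by a different route than the paper. You prove \eqref{eq:split} first on $\mathcal F$, where $\xi(c,z_0)\in\mathcal F$ and the intertwining $F_{Jd}(z)=zF_d(z)$ makes the identity an exact finite computation, and then extend to all of $\ell^2$ by density, using the operator bound \eqref{eq:normxi}, the continuity of $c\mapsto F_c(z_0)=\langle c,\mathfrak p_{\overline{z_0}}\rangle$, and the closedness of $T$; for this forward inclusion $\Xi_{z_0}(\ell^2)\subseteq D(T)$ you only need the definition of $D(T)$ as a closure. The paper instead argues top-down for arbitrary $c\in\ell^2$ at once: since $c-F_c(z_0)e_0\perp\mathfrak p_{\overline{z_0}}$, the full orthogonal decomposition \eqref{eq:regor} (closedness of the range $(T-z_0I)(D(T))$, which rests on the regularity bound \eqref{eq:reg}) produces a unique $v\in D(T)$ with $(T-z_0I)v=c-F_c(z_0)e_0$, and Theorem~\ref{thm:deB} is then used only to identify $v=\xi(c,z_0)$ by transporting to $\mathcal E$. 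The paper's argument is shorter and avoids approximation; yours is more elementary in its prerequisites, needing only the orthogonality $(T-z_0I)d\perp\mathfrak p_{\overline{z_0}}$ and injectivity of $T-z_0I$ (rather than the closed-range decomposition) for the converse inclusion, at the cost of an explicit limiting argument. The remaining steps --- deducing the kernel, the surjectivity onto $D(T)$, and the inverse property from \eqref{eq:split} --- coincide in substance with the paper's, your kernel identification via constant functions in $\mathcal E$ being a harmless variant of the paper's direct computation $\xi(e_0,z_0)=0$.
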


\begin{proof} Let $U:\ell^2\to \mathcal E$ denote the unitary mapping given by  $U(c)=F_c$. Then
\begin{equation*}
U(Jc)(z)=z\sum_{k=0}^\infty c_kp_k(z),\quad c\in\mathcal F, z\in \C,
\end{equation*}
i.e., $U$ is the intertwining operator between $J$ and the densely defined operator of multiplication with $z$ on $\C[z]\subset \mathcal E$. Therefore 
\begin{equation}\label{eq:unit}
U(Tc)(z)=zF_c(z),\quad c\in D(T), z\in\C.
\end{equation}

For $c\in\ell^2$ and $z_0\in\C$ we have 
$$
c-F_c(z_0)e_0 \perp \mathfrak{p}_{\overline{z_0}},
$$
so by \eqref{eq:regor} $c-F_c(z_0)e_0$ belongs to $(T-z_0 I)(D(T))$. Therefore, there exists a unique vector $v\in D(T)$ such that
\begin{equation}\label{eq:v}
c-F_c(z_0)e_0 =(T-z_0 I)(v),
\end{equation}
and applying $U$ to \eqref{eq:v} we get by \eqref{eq:unit}
$$
F_c(z)-F_c(z_0)=(z-z_0)F_v(z),\quad z\in\C.
$$
Now \eqref{eq:Fcq} shows that $F_v(z)=F_{\xi(c,z_0)}(z)$  for $z\neq z_0$, hence for all $z$, and finally  $v=\xi(c,z_0)$, showing that $\Xi_{z_0}(c)\in D(T)$. Inserting $v=\xi(c,z_0)$ in \eqref{eq:v} yields \eqref{eq:split}. 

For $v\in D(T)$ we define $c=(T-z_0 I)(v)$. Then $F_c(z_0)=0$ as $c\perp \mathfrak{p}_{\overline{z_0}}$ by \eqref{eq:regor}, and then \eqref{eq:split} gives $(T-z_0I)\Xi_{z_0}(c)=c$. By injectivity of $T-z_0I$ we get $v=\Xi_{z_0}(c)=(T-z_0 I)^{-1}(c)$.

It is easy to see that $\xi(e_0,z_0)=0$, hence $\C e_0\subseteq \ker(\Xi_{z_0})$, and from \eqref{eq:split} the converse inclusion follows.
\end{proof}

\begin{rem}\label{thm:noninj}{\rm The operator $\Xi_{z_0}$ defined in \eqref{eq:para} is  seen to satisfy 
$$
\Xi_{z_0}(e_n)=\sum_{k=0}^{n-1} a_{n,k}(z_0) e_k,\quad n\ge 1,
$$
and it follows easily that $\Xi_{z_0}(\mathcal F)=\mathcal F$.

Moreover, since $(\C e_0)^\perp=\{c\in\ell^2 \mid c_0=0\}$, we 
have the following parametrizations of $D(T)$
$$
D(T)=\{\Xi_{z_0}(c) \mid c\in\ell^2, c_0=0\},\quad z_0\in\C.
$$ 
}
\end{rem}

\section{Appendix}
We need the following result about the Nevanlinna functions defined in the Introduction.

\begin{thm}\label{thm:3points} For $u,v,w\in\C$ we have
\begin{eqnarray}
A(u,v)&=& C(u,w)A(w,v)-A(u,w)B(w,v)\label{eq:A3}\\
B(u,v)&=& D(u,w)A(w,v)-B(u,w)B(w,v)\label{eq:B3}\\
C(u,v)&=& C(u,w)C(w,v)-A(u,w)D(w,v)\label{eq:C3}\\
D(u,v)&=& D(u,w)C(w,v)-B(u,w)D(w,v)\label{eq:D3}.
\end{eqnarray}
\end{thm}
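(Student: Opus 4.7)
The plan is to establish all four identities simultaneously, as the four entries of a single $2\times 2$ matrix identity obtained from an explicit factorisation at the finite level. For each $n\ge 1$ let $A_n,B_n,C_n,D_n$ denote the partial sums obtained from \eqref{eq:A}--\eqref{eq:D} by truncating the series at $k=n-1$. By Cauchy--Schwarz and the fact that $\mathfrak{p}_z,\mathfrak{q}_z\in\ell^2$ for every $z$, each of these partial sums converges pointwise to the corresponding function as $n\to\infty$, so it suffices to prove the identity at the finite level and then take the limit.

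The first step is a Christoffel--Darboux-style telescoping of each partial sum. Using the three-term recurrence for both $(p_k)$ and $(q_k)$ (the latter valid only for $k\ge 1$, with $q_0=0$ and $q_1=1/a_0$), together with the Wronskian identity $p_n(x)q_{n-1}(x)-p_{n-1}(x)q_n(x)=-1/a_{n-1}$, one obtains the closed forms
\begin{align*}
A_n(u,v)&=a_{n-1}\bigl(q_n(u)q_{n-1}(v)-q_{n-1}(u)q_n(v)\bigr),\\
B_n(u,v)&=a_{n-1}\bigl(p_n(u)q_{n-1}(v)-p_{n-1}(u)q_n(v)\bigr),\\
C_n(u,v)&=a_{n-1}\bigl(q_n(u)p_{n-1}(v)-q_{n-1}(u)p_n(v)\bigr),\\
D_n(u,v)&=a_{n-1}\bigl(p_n(u)p_{n-1}(v)-p_{n-1}(u)p_n(v)\bigr).
\end{align*}
The only subtle point is the bookkeeping of the $k=0$ boundary terms, which precisely absorb the constants $\pm 1$ built into the definitions of $B$ and $C$.

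Now introduce
\begin{equation*}
L_n(u):=\begin{pmatrix} q_n(u) & -q_{n-1}(u)\\ p_n(u) & -p_{n-1}(u)\end{pmatrix},
\end{equation*}
whose determinant equals $-1/a_{n-1}$ by the Wronskian. A direct $2\times 2$ multiplication using the four formulas above yields the factorisation
\begin{equation*}
M_n(u,v):=\begin{pmatrix} C_n(u,v) & -A_n(u,v)\\ D_n(u,v) & -B_n(u,v)\end{pmatrix}=L_n(u)\,L_n(v)^{-1}.
\end{equation*}
The multiplicativity
\begin{equation*}
M_n(u,w)M_n(w,v)=L_n(u)L_n(w)^{-1}L_n(w)L_n(v)^{-1}=L_n(u)L_n(v)^{-1}=M_n(u,v)
\end{equation*}
is then automatic. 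Passing to the limit $n\to\infty$ gives $M(u,v)=M(u,w)M(w,v)$, and expanding the right-hand side entry by entry reproduces exactly \eqref{eq:A3}--\eqref{eq:D3}. The main obstacle is carrying out the finite Christoffel--Darboux telescoping for $B_n$ and $C_n$ with the correct signs and boundary contributions; once those are in hand, the rest of the argument is purely algebraic.
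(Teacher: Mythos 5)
Your proof is correct and follows essentially the same route as the paper: both rest on the finite Christoffel--Darboux determinant formulas for the partial sums (Proposition 5.24 in Schm\"udgen, quoted in the Appendix), package them into a $2\times2$ transfer matrix factoring as a product of a matrix in $u$ and the inverse of the same matrix in $v$, deduce the cocycle identity $M_n(u,w)M_n(w,v)=M_n(u,v)$, and let $n\to\infty$. Your $M_n$ differs from the paper's $h_n$ only by conjugation with $\mathrm{diag}(1,-1)$, and your factorisation $L_n(u)L_n(v)^{-1}$ is the same as the paper's $P_n(u)^{-1}P_n(v)$ written from the other side.
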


From the obvious relations
 $$
 A(u,v)=-A(v,u),\; B(u,v)=-C(v,u),\; D(u,v)=-D(v,u) 
 $$
 and putting $w=0$ in the formulas of Theorem~\ref{thm:3points}, we get the following formulas in terms of the one variable functions \eqref{eq:A-D}:
 
 \begin{cor}\label{thm:2to1} For $u, v\in\C$ we have
 \begin{eqnarray}
A(u,v)&=& \left|\begin{array}{cc}
 A(u)&\;A(v)\\C(u)&\;C(v)\end{array}\right|\label{eq:A1}\\
 B(u,v)&=& \left|\begin{array}{cc}
 B(u)&\;A(v)\\D(u)&\;C(v)\end{array}\right|\label{eq:B1}\\
  C(u,v)&=& \left|\begin{array}{cc}
 A(u)&\;B(v)\\C(u)&\;D(v)\end{array}\right|\label{eq:C1}\\
  D(u,v)&=& \left|\begin{array}{cc}
 B(u)&\;B(v)\\D(u)&\;D(v)\end{array}\right|\label{eq:D1}.
 \end{eqnarray}
\end{cor}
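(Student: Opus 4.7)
The plan is to deduce Corollary~\ref{thm:2to1} directly from Theorem~\ref{thm:3points} by specializing the auxiliary variable $w$ to $0$, combined with the three antisymmetry relations listed just before the corollary.

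First I would record the values of the two-variable functions when one argument is $0$. By definition \eqref{eq:A-D} we have $A(u,0)=A(u)$, $B(u,0)=B(u)$, $C(u,0)=C(u)$, $D(u,0)=D(u)$. Using the antisymmetries $A(u,v)=-A(v,u)$, $B(u,v)=-C(v,u)$, $D(u,v)=-D(v,u)$, I then obtain
\begin{equation*}
A(0,v)=-A(v),\quad B(0,v)=-C(v),\quad C(0,v)=-B(v),\quad D(0,v)=-D(v).
\end{equation*}
These are the only ingredients needed besides Theorem~\ref{thm:3points}.

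Next I would plug $w=0$ into each of \eqref{eq:A3}--\eqref{eq:D3} and simplify. For instance, from \eqref{eq:C3},
\begin{equation*}
C(u,v)=C(u)C(0,v)-A(u)D(0,v)=-C(u)B(v)+A(u)D(v),
\end{equation*}
which is precisely the determinant in \eqref{eq:C1}. The other three identities come out the same way: \eqref{eq:A3} gives $A(u,v)=A(u)C(v)-C(u)A(v)$, matching \eqref{eq:A1}; \eqref{eq:B3} gives $B(u,v)=-D(u)A(v)+B(u)C(v)$, matching \eqref{eq:B1}; and \eqref{eq:D3} gives $D(u,v)=-D(u)B(v)+B(u)D(v)$, matching \eqref{eq:D1}.

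There is no real obstacle here: once Theorem~\ref{thm:3points} and the antisymmetry relations are in hand, the corollary is pure substitution and sign bookkeeping. The only care required is tracking the signs introduced by the antisymmetry formulas (and observing that the relation $C(u,v)=-B(v,u)$, used implicitly to compute $C(0,v)$, follows by combining the two stated antisymmetries with the identity $B(u,v)=-C(v,u)$).
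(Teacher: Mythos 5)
Your proof is correct and follows exactly the paper's route: the paper likewise derives the corollary by setting $w=0$ in Theorem~\ref{thm:3points} and applying the antisymmetry relations $A(u,v)=-A(v,u)$, $B(u,v)=-C(v,u)$, $D(u,v)=-D(v,u)$; you have simply written out the sign bookkeeping that the paper leaves to the reader.
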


We have not been able to find the formulas of Theorem~\ref{thm:3points} in the literature, so we indicate a proof.  The formulas of Corollary~\ref{thm:2to1} expressing the two variable functions in terms of the one variable functions were, as far as we know,  first  given in \cite{Bu:Ca} and included in \cite[exercise 7.8 (3)]{Sch}.  (Unfortunately there is a misprint in the exercise: $B$ and $C$ are interchanged.)

We begin by introducing polynomial approximations to the Nevanlinna functions.
 
\begin{prop}\cite[Proposition 5.24]{Sch}\label{thm:An-Dn} For $u,v\in\C$ and $n\ge 0$ we have
\begin{eqnarray*}
A_n(u,v)&:=&(u-v)\sum_{k=0}^nq_k(u)q_k(v)=
a_n\left|\begin{array}{cc}
 q_{n+1}(u)&\;q_{n+1}(v)\\q_{n}(u)&\;q_{n}(v)\end{array}\right|\\
B_n(u,v)&:=&-1+(u-v)\sum_{k=0}^n p_k(u)q_k(v)=
a_n\left|\begin{array}{cc}
p_{n+1}(u)&\;q_{n+1}(v)\\p_{n}(u)&\;q_{n}(v)\end{array}\right|\\
C_n(u,v)&:=&1+(u-v)\sum_{k=0}^n q_k(u)p_k(v)=
a_n\left|\begin{array}{cc}
q_{n+1}(u)&\;p_{n+1}(v)\\
q_{n}(u)&\;p_{n}(v)\end{array}\right|\\
D_n(u,v)&:=&(u-v)\sum_{k=0}^np_k(u)p_k(v)=
a_n\left|\begin{array}{cc}
p_{n+1(}u)&\;p_{n+1}(v)\\
p_{n}(u)&\;p_{n}(v)\end{array}\right|.
\end{eqnarray*}
\end{prop}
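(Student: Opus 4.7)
The plan is to prove all four identities simultaneously by induction on $n\ge 0$, exploiting the fact that $(p_n)$ and $(q_n)$ satisfy the \emph{same} three-term recurrence
\[
a_n r_{n+1}(z) = (z-b_n)r_n(z) - a_{n-1}r_{n-1}(z),\quad n\ge 0,\; a_{-1}:=0,
\]
and differ only in their initial conditions: $p_0=1,\; p_1(z)=(z-b_0)/a_0$ versus $q_0=0,\; q_1=1/a_0$ (the latter being immediate from the definition $q_n(z)=\int(p_n(z)-p_n(x))/(z-x)\,d\mu(x)$ recalled in the introduction).

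The base case $n=0$ is handled by direct substitution. For instance, $B_0(u,v) = -1 + (u-v)p_0(u)q_0(v) = -1$, while the proposed determinant equals $a_0\bigl[p_1(u)q_0(v)-p_0(u)q_1(v)\bigr] = a_0\cdot(-1/a_0) = -1$. The cases $A_0=0$, $C_0=1$, $D_0=0$ are verified in the same manner: since $q_0\equiv 0$, only $p_0q_1$ or $q_1p_0$ contribute to the $2\times 2$ determinants, and they match the constants $0,+1,0$ built into the definitions.

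For the inductive step, fix one of the cases, say $A$, and write $A_n(u,v) = A_{n-1}(u,v) + (u-v)q_n(u)q_n(v)$. Using the induction hypothesis on $A_{n-1}$, the entire proposition reduces to the following one-step Christoffel--Darboux identity:
\[
a_n\bigl[r_{n+1}(u)s_n(v)-r_n(u)s_{n+1}(v)\bigr] - a_{n-1}\bigl[r_n(u)s_{n-1}(v)-r_{n-1}(u)s_n(v)\bigr] = (u-v)r_n(u)s_n(v),
\]
with $(r,s)=(q,q),(p,q),(q,p),(p,p)$ for $A,B,C,D$ respectively. This identity is obtained by multiplying the recurrence for $r_{n+1}(u)$ by $s_n(v)$ and the recurrence for $s_{n+1}(v)$ by $r_n(u)$ and subtracting: the $(z-b_n)$ contributions cancel except for $(u-v)r_n(u)s_n(v)$ on the right, and the $a_{n-1}$ contributions regroup precisely into the second bracket on the left.

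There is no genuine obstacle; the proposition is the classical Christoffel--Darboux formula applied in parallel to the four combinations $(r,s)\in\{p,q\}^2$. The only bookkeeping that requires care is the base case, where the constants $-1$ in $B_0$ and $+1$ in $C_0$ must emerge from $a_0(p_1 q_0 - p_0 q_1)$ and $a_0(q_1 p_0 - q_0 p_1)$ via $q_0=0$ and $q_1=1/a_0$; after that, the inductive step is uniform across all four cases.
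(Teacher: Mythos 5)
The paper does not actually prove this proposition: it is quoted verbatim from Schm\"udgen \cite[Proposition 5.24]{Sch}, so your argument is supplying a proof where the authors supply only a citation. Your strategy --- simultaneous induction on $n$ for all four pairs $(r,s)\in\{p,q\}^2$, with the inductive step reduced to the one-step Christoffel--Darboux identity obtained by cross-multiplying the two recurrences at index $n$ and subtracting --- is the standard argument for these Wronskian-type formulas and is sound; I verified that the one-step identity you display is exactly what the cross-multiplication yields, and that it is applied only at indices $n\ge 1$, where both $(p_n)$ and $(q_n)$ do satisfy the recurrence.

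Two small inaccuracies should be fixed, though neither breaks the proof. First, $D_0(u,v)=(u-v)p_0(u)p_0(v)=u-v$, not $0$, and correspondingly $a_0\bigl[p_1(u)p_0(v)-p_0(u)p_1(v)\bigr]=(u-b_0)-(v-b_0)=u-v$; the base case for $D$ still checks out, but your stated value $D_0=0$ is wrong, and your blanket justification (``since $q_0\equiv 0$, only $p_0q_1$ or $q_1p_0$ contribute'') does not apply to the $D$ determinant, which contains no $q$'s at all. Second, the sequences $(p_n)$ and $(q_n)$ do \emph{not} satisfy the same recurrence for all $n\ge 0$ with $a_{-1}=0$: at $n=0$ the recurrence together with $q_0=0$ would force $a_0q_1=(z-b_0)q_0=0$, contradicting $q_1=1/a_0$. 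The correct statement is that $(q_n)$ satisfies the recurrence for $n\ge 1$ with the initial data $q_0=0$, $q_1=1/a_0$. Since your induction starts from the directly verified case $n=0$ and only invokes the recurrence at indices $n\ge 1$, this does not affect the argument, but the phrasing ``same recurrence \dots\ differ only in their initial conditions'' as written is internally inconsistent and should be amended.
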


It is important to notice that
\begin{equation*}\label{eq:det4}
\left|\begin{array}{cc}\ A_n(u,v)&\;B_n(u,v)\\
 C_n(u,v)&\; D_n(u,v)\end{array}\right|=1\textrm{ for }(u,v)\in\mathbb
C^2,
\end{equation*}
cf. \cite[Equation(5.57)]{Sch}.

For later use we introduce the transfer matrix with determinant 1
\begin{equation}\label{eq:transfer1}
h_n(u,v)=\left(\begin{array}{cc} C_n(u,v)&\; A_n(u,v)\\
-D_n(u,v)&\; -B_n(u,v)\end{array}\right),\quad u,v\in\mathbb
C,\;n\ge 0.
\end{equation}
The name transfer matrix is motivated by

\begin{prop}\label{thm:transfer} For $u,v\in\mathbb C,\;n\ge 0$ we have
\begin{equation}\label{eq:transfer2}
\left(\begin{array}{cc}
p_n(u)&\;q_n(u)\\
p_{n+1}(u)&\;q_{n+1}(u)\end{array}\right)h_n(u,v)=\left(\begin{array}{cc}
p_n(v)&\;q_n(v)\\
p_{n+1}(v)&\;q_{n+1}(v)\end{array}\right)
\end{equation}
\end{prop}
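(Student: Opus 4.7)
The plan is to recognize the transfer matrix as a factorization
$$
h_n(u,v) = M_n(u)^{-1}M_n(v), \qquad M_n(z):=\begin{pmatrix} p_n(z) & q_n(z)\\ p_{n+1}(z) & q_{n+1}(z)\end{pmatrix}.
$$
Once this identification is made, the claimed equation \eqref{eq:transfer2} is nothing but $M_n(u)h_n(u,v)=M_n(v)$, obtained by multiplying on the left by $M_n(u)$.

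The first step is the classical Wronskian identity $\det M_n(z)=1/a_n$. Both $(p_n(z))$ and $(q_n(z))$ satisfy the same three-term recurrence $a_ny_{n+1}=(z-b_n)y_n-a_{n-1}y_{n-1}$ for $n\ge 1$, so substituting this into $W_n(z):=a_n\bigl(p_n(z)q_{n+1}(z)-p_{n+1}(z)q_n(z)\bigr)$ gives $W_n(z)=W_{n-1}(z)$. The initial data $p_0=1$, $q_0=0$, $a_0q_1(z)=1$ yield $W_0(z)=1$, so $W_n(z)\equiv 1$ and $\det M_n(z)=1/a_n$ for all $n$. Consequently $M_n(u)^{-1}=a_n\,\mathrm{adj}(M_n(u))$, where the adjugate is the explicit matrix with entries $\pm p_{n+1}(u), \pm q_n(u)$ etc.

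The second step is to expand the product $a_n\,\mathrm{adj}(M_n(u))M_n(v)$ entrywise and read off the result from the determinantal formulas of Proposition~\ref{thm:An-Dn}. For instance, the $(1,1)$ entry is $a_n\bigl(q_{n+1}(u)p_n(v)-q_n(u)p_{n+1}(v)\bigr)=C_n(u,v)$; the $(1,2)$ entry is $a_n\bigl(q_{n+1}(u)q_n(v)-q_n(u)q_{n+1}(v)\bigr)=A_n(u,v)$; the $(2,1)$ entry is $-a_n\bigl(p_{n+1}(u)p_n(v)-p_n(u)p_{n+1}(v)\bigr)=-D_n(u,v)$; and the $(2,2)$ entry is $-a_n\bigl(p_{n+1}(u)q_n(v)-p_n(u)q_{n+1}(v)\bigr)=-B_n(u,v)$. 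This exhibits $M_n(u)^{-1}M_n(v)$ as precisely the matrix $h_n(u,v)$ defined in \eqref{eq:transfer1}.

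I do not expect any real obstacle: the argument reduces to the Wronskian computation plus four direct entrywise identifications that are already packaged in Proposition~\ref{thm:An-Dn}. As a bonus, the factorization also explains the displayed identity $\det h_n(u,v)=1$, since $\det(M_n(u)^{-1}M_n(v))=a_n/a_n=1$. The only care required is bookkeeping of signs in the adjugate, which determines why the second row of $h_n$ carries $-D_n$ and $-B_n$ rather than $+D_n$ and $+B_n$.
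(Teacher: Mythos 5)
Your proof is correct and follows essentially the same route as the paper: both factor $h_n(u,v)$ as $a_n\,\mathrm{adj}(M_n(u))\,M_n(v)$ using the determinantal formulas of Proposition~\ref{thm:An-Dn} and then invoke the Wronskian identity $a_n\bigl(p_n(z)q_{n+1}(z)-p_{n+1}(z)q_n(z)\bigr)=1$ to identify $a_n\,\mathrm{adj}(M_n(u))$ with $M_n(u)^{-1}$. The only difference is that you derive the Wronskian identity from the three-term recurrence and the initial data, whereas the paper simply cites it from Schm\"{u}dgen's book.
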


\begin{proof} The four formulas of Proposition~\ref{thm:An-Dn} can be expressed
as the matrix equation
$$
h_n(u,v)=a_n
\left(\begin{array}{cc}
q_{n+1}(u)&\;-q_n(u)\\
-p_{n+1}(u)&\;p_{n}(u)\end{array}\right)\left(\begin{array}{cc}
p_n(v)&\;q_n(v)\\
p_{n+1}(v)&\;q_{n+1}(v)\end{array}\right).
$$
However, by \cite[Equation (5.52)]{Sch}
$$
\left(\begin{array}{cc}
q_{n+1}(u)&\;-q_n(u)\\
-p_{n+1}(u)&\;p_{n}(u)\end{array}\right)^{-1}=a_n
\left(\begin{array}{cc}
p_n(u)&\;q_n(u)\\
p_{n+1}(u)&\;q_{n+1}(u)\end{array}\right),
$$
and \eqref{eq:transfer2} follows.\end{proof}

By the uniqueness of a matrix $h_n(u,v)$ satisfying \eqref{eq:transfer2} we get:
\begin{cor}\label{thm:transfercor} For $u,v,w\in\mathbb C,\;n\ge 0$ we
  have
\begin{equation*}\label{eq:transfer3}
h_n(u,w)h_n(w,v)=h_n(u,v),
\end{equation*}
\begin{equation*}\label{eq:transfer4}
h_n(u,v)=h_n(v,u)^{-1}.
\end{equation*}
\end{cor}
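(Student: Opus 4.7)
The plan is to deduce both identities from the defining property \eqref{eq:transfer2}, exploiting the \emph{uniqueness} of the transfer matrix that the author has already flagged. Introduce the auxiliary matrix $P_n(u):=\begin{pmatrix}p_n(u)&q_n(u)\\ p_{n+1}(u)&q_{n+1}(u)\end{pmatrix}$. The proof of Proposition~\ref{thm:transfer} (via \cite[Equation (5.52)]{Sch}) shows that $P_n(u)$ has the nonzero constant Casoratian as its determinant, so it is invertible for every $u\in\C$. Rewriting \eqref{eq:transfer2} as $P_n(u)\,h_n(u,v)=P_n(v)$ gives the explicit formula $h_n(u,v)=P_n(u)^{-1}P_n(v)$.

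With this factorization the two identities are one line each. For the multiplicative relation,
\begin{equation*}
h_n(u,w)h_n(w,v)=P_n(u)^{-1}P_n(w)\,P_n(w)^{-1}P_n(v)=P_n(u)^{-1}P_n(v)=h_n(u,v).
\end{equation*}
For the inversion identity, $h_n(v,u)=P_n(v)^{-1}P_n(u)=\bigl(P_n(u)^{-1}P_n(v)\bigr)^{-1}=h_n(u,v)^{-1}$.

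There is essentially no obstacle here. The only point that requires care is invoking the nonvanishing of the Casoratian in order to make sense of $P_n(u)^{-1}$; but this is exactly the content already used in the proof of Proposition~\ref{thm:transfer}. If one prefers to proceed purely by uniqueness and avoid writing $P_n(u)^{-1}$ explicitly, one can instead observe that both $h_n(u,w)h_n(w,v)$ and $h_n(u,v)$ satisfy the matrix equation \eqref{eq:transfer2} for the pair $(u,v)$ (by composing $P_n(u)h_n(u,w)=P_n(w)$ with $P_n(w)h_n(w,v)=P_n(v)$), and similarly $h_n(v,u)h_n(u,v)$ and the identity matrix both intertwine $P_n(v)$ with itself; invertibility of $P_n(u)$, respectively $P_n(v)$, then yields uniqueness and hence the two equalities.
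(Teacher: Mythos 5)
Your proof is correct and follows essentially the same route as the paper: the authors simply invoke ``the uniqueness of a matrix $h_n(u,v)$ satisfying \eqref{eq:transfer2}'', which is exactly the invertibility of your $P_n(u)$ (guaranteed by \cite[Equation (5.52)]{Sch}), and your factorization $h_n(u,v)=P_n(u)^{-1}P_n(v)$ is just that uniqueness made explicit. No gaps; the argument matches the paper's.
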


{\it Proof of Theorem~\ref{thm:3points}}. Letting $n$ tend to infinity in \eqref{eq:transfer1} we obtain the entire matrix function
$$
h(u,v)=\left(\begin{array}{cc} C(u,v)&\; A(u,v)\\
-D(u,v)&\; -B(u,v)\end{array}\right),\quad u,v\in\mathbb
C,
$$
with determinant 1 satisfying $h(u,w)h(w,v)=h(u,v)$, which is equivalent to the formulas
\eqref{eq:A3},\eqref{eq:B3},\eqref{eq:C3} and \eqref{eq:D3} of the theorem.$\quad\square$

\begin{rem}{\rm The M\"{o}bius transformation $M(u,v):\C^*\to \C^*$ defined by 
\begin{equation*}\label{eq:mob}
M(u,v)(z):= \frac{C(u,v) z+A(u,v)}{-D(u,v)z-B(u,v)},\quad z\in\C^*,
\end{equation*}
maps the Weyl circle $K_v$ onto the Weyl circle $K_u$, where $K_u=\R^*$ if $u\in\R$. For $u\in\C\setminus\R$ the Weyl circle is defined in \cite[Section 7.3]{Sch}.} 
\end{rem}

The following Lemma unifies some calculations:
\begin{lem}\label{thm:doubledet} For vectors $x,y,z,w\in\mathbb C^2$ we have
the following determinant equation
 $$\left|\begin{array}{cc}
|x\;\;y| & |x\;\;z|\\
|w\;\;y| & |w\;\;z|\end{array}\right|=
|x\;\;w||y\;\;z|,
 $$
 where
 $$|x\;\;y|=\left|\begin{array}{cc}
x_1&\;y_1\\
x_2&\;y_2\end{array}\right|
\textrm{ etc.}
 $$
 \end{lem}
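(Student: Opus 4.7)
\textbf{Proof plan for Lemma~\ref{thm:doubledet}.}

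The identity to be proved is a scalar equation in the eight complex coordinates of $x,y,z,w$, so in principle it can be settled by direct expansion. Writing $|x\;y|=x_1y_2-x_2y_1$ and similarly for the other minors, the left-hand side is a polynomial of degree four whose eight terms split into two pairs that cancel ($x_1y_2w_1z_2$ against $x_1z_2w_1y_2$, and $x_2y_1w_2z_1$ against $x_2z_1w_2y_1$), leaving
\[
x_1w_2(y_1z_2-y_2z_1)-x_2w_1(y_1z_2-y_2z_1)=(x_1w_2-x_2w_1)(y_1z_2-y_2z_1),
\]
which is exactly $|x\;w|\,|y\;z|$. This brute-force verification is what I would write out if a very short proof is desired.

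A slightly more conceptual route, which avoids writing out the eight terms, is to argue by reduction to a basis. First, if $x$ and $w$ are proportional, say $x=\lambda w$, then both sides vanish: the right-hand side because $|x\;w|=0$, and the left-hand side by direct inspection using the alternating property of the $2\times 2$ determinant. If instead $\{x,w\}$ is a basis of $\C^2$, I would expand
\[
y=\alpha x+\beta w,\qquad z=\gamma x+\delta w,
\]
and compute the six minors in terms of $|x\;w|$ via bilinearity and alternation: $|x\;y|=\beta|x\;w|$, $|x\;z|=\delta|x\;w|$, $|w\;y|=-\alpha|x\;w|$, $|w\;z|=-\gamma|x\;w|$, and $|y\;z|=(\alpha\delta-\beta\gamma)|x\;w|$. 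Plugging these into both sides reduces the identity to
\[
\bigl(\beta(-\gamma)-\delta(-\alpha)\bigr)|x\;w|^{2}=(\alpha\delta-\beta\gamma)|x\;w|^{2},
\]
which is manifestly true.

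There is no substantial obstacle here; the lemma is a special case of the Plücker relation for the Grassmannian $G(2,4)$ (or equivalently of Sylvester's determinant identity), and either the direct expansion or the basis argument above is a self-contained proof of a few lines. I would probably opt for the basis argument, since it makes the role of the alternating bilinear form $(u,v)\mapsto|u\;v|$ on $\C^2$ transparent and explains why the identity holds.
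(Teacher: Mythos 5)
Your proof is correct. Your first method (direct expansion with the two cancelling pairs) is exactly the paper's ``first method: direct computation,'' just written out in full, and your basis-decomposition argument is a valid self-contained alternative. The paper's preferred second method is different from yours, though: the authors form the cyclic sum
$M(x,y,z,w)=|x\;y||z\;w|+|x\;z||w\;y|+|x\;w||y\;z|$,
observe that it is a $4$-linear alternating form on $\C^2$, and conclude $M\equiv 0$ because an alternating $4$-linear form on a space of dimension at most $3$ vanishes identically; since $M=-\bigl(|x\;y||w\;z|-|x\;z||w\;y|\bigr)+|x\;w||y\;z|$, this is the lemma. That argument avoids any case split (no need to treat $x$ proportional to $w$ separately) and checking multilinearity and the alternating property requires no computation at all, whereas your basis argument requires distinguishing the degenerate case and computing five minors, but in exchange it makes the geometric content (everything is measured against the area form $|x\;w|$) more transparent. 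Both routes, like your remark about the Pl\"ucker relation for $G(2,4)$, are standard and equally acceptable here.
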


\begin{proof} First method: Direct computation.

Second method: Define
 $$M(x,y,z,w)=
|x\;\;y||z\;\;w|+|x\;\;z||w\;\;y|+
|x\;\;w||y\;\;z|,
 $$
 where it should be noticed that $y,z,w$ appear in its cyclic
permutations. Clearly $M$ is a $4$-linear form on $\mathbb C^2$,
and it is alternating, i.e.,\ is zero, if any two arguments agree. An
alternating $4$-linear form on a vector space of dimension $\leq 3$
is identically zero, hence $M\equiv0$.
\end{proof}

In various proofs we need that certain functions are Pick function, i.e., holomorphic functions in the cut plane $\C\setminus\R$ with certain properties, see \cite{Do}.
 
\begin{prop}\label{thm:Pick} The meromorphic functions $B/D$ and $A/C$ are Pick functions, i.e., they map the upper (resp. lower) open half-plane into itself.
\end{prop}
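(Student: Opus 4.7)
The plan is to derive both statements from \eqref{eq:A1}, \eqref{eq:D1}, the defining series \eqref{eq:A}, \eqref{eq:D}, and the reality of the coefficients of $p_n, q_n$ (which implies that $A,B,C,D$ are real entire functions of one variable, so $F(\overline u)=\overline{F(u)}$ for $F\in\{A,B,C,D\}$). The key observation is that plugging $v=\overline u$ into the one-variable determinantal formulas \eqref{eq:A1}, \eqref{eq:D1} produces exactly the imaginary parts of the ratios $A/C$ and $B/D$, while plugging $v=\overline u$ into the series \eqref{eq:A}, \eqref{eq:D} gives a manifestly positive quantity.

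For the ratio $B/D$: first I would note that, by \eqref{eq:D1},
$$
D(u,\overline u)=B(u)D(\overline u)-D(u)B(\overline u)=B(u)\overline{D(u)}-\overline{B(u)\overline{D(u)}}=2i\,\Im\bigl(B(u)\overline{D(u)}\bigr),
$$
while \eqref{eq:D} evaluates to
$$
D(u,\overline u)=(u-\overline u)\sum_{k=0}^\infty |p_k(u)|^2=2i\,\Im(u)\,\|\mathfrak{p}_u\|^2.
$$
Comparing these two expressions yields the identity
$$
\Im\bigl(B(u)\overline{D(u)}\bigr)=\Im(u)\,\|\mathfrak{p}_u\|^2,
$$
which is strictly positive (resp.\ negative) whenever $\Im(u)>0$ (resp.\ $<0$), since $\|\mathfrak{p}_u\|>0$ everywhere. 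In particular $D(u)\ne 0$ on $\C\setminus\R$, so $B/D$ is holomorphic there, and
$$
\Im\!\left(\frac{B(u)}{D(u)}\right)=\frac{\Im\bigl(B(u)\overline{D(u)}\bigr)}{|D(u)|^2}=\frac{\Im(u)\,\|\mathfrak{p}_u\|^2}{|D(u)|^2},
$$
which has the same sign as $\Im(u)$. This establishes that $B/D$ is a Pick function.

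The argument for $A/C$ is completely parallel: by \eqref{eq:A1},
$$
A(u,\overline u)=A(u)\overline{C(u)}-C(u)\overline{A(u)}=2i\,\Im\bigl(A(u)\overline{C(u)}\bigr),
$$
and by \eqref{eq:A}, $A(u,\overline u)=2i\,\Im(u)\,\|\mathfrak{q}_u\|^2$. Equating the two and using $\|\mathfrak{q}_u\|>0$ shows $C(u)\ne 0$ on $\C\setminus\R$ and
$$
\Im\!\left(\frac{A(u)}{C(u)}\right)=\frac{\Im(u)\,\|\mathfrak{q}_u\|^2}{|C(u)|^2},
$$
which has the sign of $\Im(u)$. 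There is no real obstacle here; the only subtlety is recognizing that the same identity \eqref{eq:D1} (resp.\ \eqref{eq:A1}) simultaneously gives the non-vanishing of $D$ (resp.\ $C$) off the real axis and the sign of the imaginary part of the ratio.
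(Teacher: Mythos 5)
Your proof is correct, but it takes a genuinely different route from the paper's. The paper handles $B/D$ by citing an external reference and then deduces the statement for $A/C$ by passing to the truncated Jacobi matrix $J^{(1)}$ of Remark~\ref{thm:trunc} and using the relations $A=a_0^{-2}\widetilde{D}$, $C=-b_0a_0^{-2}\widetilde{D}-\widetilde{B}$ from the literature, so that $-C/A=b_0+a_0^2(\widetilde{B}/\widetilde{D})$ inherits the Pick property. You instead evaluate the two-variable functions at $(u,\overline{u})$ in two ways --- once via the determinant formulas \eqref{eq:A1}, \eqref{eq:D1} and once via the defining series \eqref{eq:A}, \eqref{eq:D} --- to obtain the identities $\Im\bigl(B(u)\overline{D(u)}\bigr)=\Im(u)\,||\mathfrak{p}_u||^2$ and $\Im\bigl(A(u)\overline{C(u)}\bigr)=\Im(u)\,||\mathfrak{q}_u||^2$, which in one stroke give the non-vanishing of $D$ and $C$ off $\R$ and the correct sign of $\Im(B/D)$ and $\Im(A/C)$. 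All the ingredients you use (Corollary~\ref{thm:2to1}, the positivity of $||\mathfrak{p}_z||$ and $||\mathfrak{q}_z||$, and the realness of the coefficients of $p_n,q_n$) are established in the paper before this proposition and independently of it, so there is no circularity. Your argument is the more self-contained one, treating $B/D$ and $A/C$ symmetrically without outsourcing either half; the paper's route is shorter on the page and has the side benefit of recording the intertwining relations between the Nevanlinna functions of the original and truncated problems, which it reuses elsewhere (e.g., in the proofs of Theorem~\ref{thm:zeros} and Theorem~\ref{thm:pos}).
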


\begin{proof} The result about $B/D$ is in \cite[Proposition 1.3]{B}. The result about $A/C$ can be deduced from the previous result by considering the indeterminate moment problem corresponding to the truncated Jacobi matrix $J^{(1)}$ considered in Remark~\ref{thm:trunc}. There are simple relations between the Nevanlinna functions $\widetilde{A},\ldots,\widetilde{D}$ of the truncated problem and those of the original moment problem, see \cite{Pe}:
\begin{equation*}\label{eq:netr}
A(z)=a_0^{-2}\widetilde{D}(z),\quad C(z)=-b_0 a_0^{-2}\widetilde{D}(z)-\widetilde{B}(z),\quad z\in\C.
\end{equation*}
Therefore $-C/A=b_0+a_0^2(\widetilde{B}/\widetilde{D})$, which shows that $-C/A$ is a Pick function, and so is $A/C$.
\end{proof}

By a famous Theorem of M. Riesz each of the functions $F_c$ defined in \eqref{eq:FGc} are of minimal exponential type meaning that for each $\varepsilon>0$ there exists a constant $C_\varepsilon>0$ such that
$$
|F_c(z)|\le C_\varepsilon e^{\varepsilon |z|},\quad z\in\C.
$$ 
This  follows from the Cauchy-Schwarz inequality because the norm $||\mathfrak{p}_z||$ satisfies the same inequality by \cite[Theorem 2.4.3]{Ak}. Using that the polynomials $q_{n+1}/q_1$ are the orthonormal polynomials for the indeterminate truncated Jacobi matrix $J^{(1)}$, cf. Remark~\ref{thm:trunc}, we also get that the functions $G_c$ from \eqref{eq:FGc} are of minimal exponential type.

We next recall an important property of these functions in case they are not polynomials.

\begin{prop}\label{thm:minexp} For each $c\in \ell^2\setminus\mathcal F$ the functions
$F_c, G_c$ are transcendental  and have a countably infinite set of zeros. 

In particular, for each $v\in\C$ the functions of the variable $u$, $A(u,v), B(u,v)$, $C(u,v), D(u,v)$ have a countably infinite set of zeros. 
\end{prop}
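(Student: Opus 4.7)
The plan is to split the proof into three steps: transcendence of $F_c$ and $G_c$, a Hadamard-factorization argument giving countably infinite zero sets, and a reduction of the statement about $A,B,C,D$ to the first assertion. For transcendence of $F_c$ I would use that $c\mapsto F_c$ is a unitary map $\ell^2\to\mathcal E$ with orthonormal basis $(p_n)$: if $F_c$ were a polynomial of degree $d$ it would already lie in the finite-dimensional subspace spanned by $p_0,\ldots,p_d$, so the uniqueness of the expansion forces $c_n=0$ for $n>d$, i.e.\ $c\in\mathcal F$. For $G_c$ I would invoke Remark~\ref{thm:trunc}: the orthonormal polynomials associated with the (again indeterminate) truncated Jacobi matrix $J^{(1)}$ are $a_0 q_{n+1}$, so writing $G_c(z)=a_0^{-1}\sum_{n\ge 0}c_{n+1}(a_0 q_{n+1}(z))$ reduces the question to the case of $F$ applied to the shifted sequence $(c_1,c_2,\ldots)\in\ell^2$, which already lies outside the corresponding finitely supported set.

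The main step is the Hadamard argument. By the Cauchy--Schwarz bound recalled just before the proposition, $F_c$ and $G_c$ are of minimal exponential type. I would argue that if a transcendental entire function $f$ of minimal exponential type had only finitely many zeros $z_1,\ldots,z_m$, then setting $P(z)=\prod(z-z_k)$, the quotient $f/P$ is zero-free and of order $\le 1$, so by Hadamard $f(z)=P(z)e^{az+b}$. Since $P$ is a polynomial and $e^{az+b}$ has exponential type $|a|$, the type of $f$ equals $|a|$; minimality forces $a=0$, and hence $f=e^{b}P$ is a polynomial, contradicting transcendence. Because the zeros of a non-identically-zero entire function are isolated, an infinite zero set is automatically countable.

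For the last assertion I fix $v\in\C$ and first observe that $\mathfrak p_v,\mathfrak q_v\in\ell^2\setminus\mathcal F$: the three-term recurrence forbids two consecutive values $p_n(v),p_{n+1}(v)$ from both vanishing (otherwise all $p_n(v)=0$, impossible since $p_0=1$), and likewise $q_0=0,\ q_1=1/a_0\ne 0$ forbid $q_n(v)$ from vanishing from some index on. The defining formulas \eqref{eq:A}--\eqref{eq:D} then read
\begin{align*}
A(u,v)&=(u-v)G_{\mathfrak q_v}(u), & D(u,v)&=(u-v)F_{\mathfrak p_v}(u),\\
B(u,v)&=-1+(u-v)F_{\mathfrak q_v}(u), & C(u,v)&=1+(u-v)G_{\mathfrak p_v}(u),
\end{align*}
so each of $A(\,\cdot\,,v),B(\,\cdot\,,v),C(\,\cdot\,,v),D(\,\cdot\,,v)$ is an entire function of $u$ that is transcendental and of minimal exponential type: multiplication by the polynomial $u-v$ and addition of a constant preserve minimal exponential type and cannot turn a transcendental function into a polynomial. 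The Hadamard step from the second paragraph then yields a countably infinite zero set in $u$.

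The principal obstacle is the Hadamard step; once it is executed carefully the rest is bookkeeping. A minor care point is that $\mathfrak p_v,\mathfrak q_v\notin\mathcal F$ must be argued from the three-term recurrence, rather than from spectral consequences established only later in the paper.
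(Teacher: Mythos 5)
Your proof is correct and follows the same overall strategy as the paper: both rest on the fact that a transcendental entire function of minimal exponential type has a countably infinite zero set, applied to $F_c$, $G_c$ and to $u\mapsto A(u,v),\ldots,D(u,v)$. The one genuine difference is in how that fact is established: the paper splits into the case of order strictly less than $1$ (Hadamard factorization) and the case of order $1$ with type $0$ (citing a theorem of Lindel\"{o}f), whereas you run a single contradiction argument --- finitely many zeros would force $f=P(z)e^{az+b}$, whose exponential type is $|a|$, so minimality gives $a=0$ and $f$ is a polynomial --- which handles both cases at once and avoids the Lindel\"{o}f citation. You also supply two details the paper leaves implicit: the transcendence of $F_c$ and $G_c$ for $c\notin\mathcal F$ (via uniqueness of the expansion in the orthonormal basis $(p_n)$ of $\mathcal E$, together with the truncation trick for $G_c$), and the verification via the three-term recurrence that $\mathfrak{p}_v,\mathfrak{q}_v\notin\mathcal F$, which is exactly what is needed to reduce the assertion about $A,\ldots,D$ to the first part. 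Both additions are correct and strengthen the exposition.
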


\begin{proof} An entire transcendental function $f$ of minimal exponential type has a countably infinite set of zeros. In fact, the order $\rho$ of $f$ is either strictly less than 1 or equal to 1, and in the latter case the type of $f$ is zero. In the first case the result follows from the Hadamard factorization Theorem, cf. \cite[p.22]{Bo}. In the second case the result follows from a Theorem of Lindel\"{o}f, see \cite[Theorem 2.20.3]{Bo}.  

For $v\in\C$ and $F$ being one of the functions $A,\ldots,D$, we see that $u\mapsto F(u,v)$ is an entire transcendental function of minimal exponential type. 
\end{proof}

{\it Proof of Theorem~\ref{thm:zeros}:} {\bf Case 1:} Let us first consider the case of $D$ with
 $Z(D)_v=\{u\in\C\mid D(u,v)=0\}$ for given $v\in\C$, cf. \eqref{eq:zset}.
 
 If $v\in\R$ then $Z(D)_v\subset\R$ by \cite[Theorem 3]{B:C}, and furthermore $Z(D)_v$ equals the support of the unique N-extremal measure which contains $v$ in the support.
 
 If $v\in\C\setminus\R$, then $D(v)\neq 0$, and using \eqref{eq:D1} we get
 $$
 D(u,v)=D(v)[B(u)-\rho D(u)],\;\rho:=B(v)/D(v),
 $$
 so $u\in Z(D)_v$ iff $B(u)=\rho D(u)$. For such $u$ we must have $D(u)\neq 0$ for otherwise $B(u)=D(u)=0$ contradicting \eqref{eq:det=1*}. This gives $u\in Z(D)_v$ iff
 $B(u)/D(u)=\rho$. Using that $B/D$ is a Pick function by  Proposition~\ref{thm:Pick},  we see that $u,v$ belong to the same half-plane.

{\bf Case 2:} We consider $Z(B)_v$ and  use \eqref{eq:B1}, viz.
$$
B(u,v)=B(u)C(v)-D(u)A(v). 
$$
Let first $v\in\R$. If $C(v)=0$ then $A(v)\neq 0$ by \eqref{eq:det=1*}, so $B(u,v)=0$ iff $D(u)=0$, hence $Z(B)_v\subset\R$. If $C(v)\neq 0$ then
$$
B(u,v)=C(v)[B(u)-\tau D(u)],\;\tau:=A(v)/C(v)\in\R,
$$
so $Z(B)_v$ is the zero set of $B-\tau D$, hence real  by Proposition~\ref{thm:supportNext}. 

Let next $v\in\C\setminus \R$. Then $C(v)\neq 0$, so $u\in Z(B)_v$ iff $B(u)=\tau D(u)$ with $\tau$ as above, but this is only possible if $D(u)\neq 0$ and hence $B(u)/D(u)=\tau$.
Using that both  $A/C$ and $B/D$ are Pick functions, cf. Proposition~\ref{thm:Pick}, we see that $u,v$ belong to the same half-plane.

{\bf Case 3:} We consider $Z(C)_v$ and  use \eqref{eq:C1}, viz.
$$
C(u,v)=A(u)D(v)-C(u)B(v).
$$
By considering the cases $v\in\R$ and $v\in\C\setminus \R$ separately  and factor out $D(v)$ in case it is non-zero, we may
proceed as in case 2.

Finally, the case of $Z(A)_v$  follows from the case 1 by considering the truncated case as in Remark~\ref{thm:trunc}. $\square$

\begin{rem}\label{thm:Dspec} {\rm As noticed in the proof above one has for $v\in\R$:
$$
D(u,v)=0 \iff u\in\supp(\mu),
$$
where $\mu$ is the N-extremal measure such that $v\in\supp(\mu)$.

Compare also with Remark~\ref{thm:alt}.
}
\end{rem}

{\it Proof of Proposition~\ref{thm:pos}}:

{\bf Case (i)}: From the proof of Theorem~\ref{thm:pqmain} (i) we know that 
$B(u,v)=-D(u,z)/D(v,z)$ for all $z\in\C\setminus\R$ since $D(v,z)\neq 0$ for these $z$. By assumption $D(x,v)\neq 0$ for $u<x<v$, so by continuity
$B(u,v)=-D(u,x)/D(v,x)$ for these $x$. We next observe that
\begin{equation}\label{eq:Bhelp}
B(u,v)\frac{v-x}{x-u}=\frac{v-x}{D(v,x)}\frac{D(u,x)}{u-x} \neq 0,\quad u<x<v,
\end{equation} 
and 
$$
\lim_{x\to u^+}\frac{D(u,x)}{u-x}=||\mathfrak{p}_u||^2,\quad
\lim_{x\to v^-}\frac{D(v,x)}{v-x}=||\mathfrak{p}_v||^2,
$$
so the function in \eqref{eq:Bhelp} is positive for $u<x<v$, hence $B(u,v)>0$.

{\bf Case (ii)}: This case is reduced to case (i) for the truncated Jacobi matrix from Remark~\ref{thm:trunc}. If $\widetilde{A},\ldots,\widetilde{D}$ denote the Nevalinna functions of two variables for the truncated case, the following formulas can be found in \cite{Pe}. (The reader is warned that this reference follows the normalization of \cite{Bu:Ca}.)
\begin{equation*}\label{eq:tilde}
\widetilde{B}(u,v)=(v-b_0)A(u,v)-C(u,v),\quad \widetilde{D}(u,v)=a_0^2A(u,v),
\end{equation*}
and since we assume that $A(u,v)=0$, we have $-C(u,v)=\widetilde{B}(u,v)>0$.
$\quad\square$

\noindent
Christian Berg\\
Department of Mathematical Sciences, University of Copenhagen\\
Universitetsparken 5, DK-2100 Copenhagen, Denmark\\
e-mail: {\tt{berg@math.ku.dk}}

\vspace{0.4cm}
\noindent
Ryszard Szwarc\\
Institute of Mathematics, University of Wroc{\l}aw\\
pl.\ Grunwaldzki 2/4, 50-384 Wroc{\l}aw, Poland\\ 
e-mail: {\tt{szwarc2@gmail.com}}

\end{document}